\newtheorem{Th}{Theorem}
\newtheorem{Rem}{Remark}
\newtheorem{Ex}{Example}
\newtheorem{Cor}{Corollary}
\title{Strictly self-similar fractals composed of star-polygons that are attractors of Iterated Function Systems.}  
\author{Vassil Tzanov}
\begin{document}

\maketitle

\begin{abstract}
 In this paper are investigated strictly self-similar fractals that are composed of an infinite number of regular star-polygons, also known as Sierpinski $n$-gons, $n$-flakes or polyflakes. 
Construction scheme for Sierpinsky $n$-gon and $n$-flake is presented where the dimensions of the Sierpinsky $\infty$-gon and the $\infty$-flake are computed to be 1 and 2, respectively.
These fractals are put in a general context and Iterated Function Systems are applied for the visualisation of the geometric iterations of the initial polygons, as well as the visualisation of sets of points that lie on the attractors of the IFS generated by random walks. 
Moreover, it is shown that well known fractals represent isolated cases of the presented generalisation.
The IFS programming code is given, hence it can be used for further investigations.
\end{abstract}
\newpage

\section{Introduction - the Cantor set and regular star-polygonal attractors}

A classic example of a strictly self-similar fractal that can be constructed by Iterated Function System is the Cantor Set \cite{Mandelbrot_1982}. 
Let us have the interval $E=[-1,1]$ and the contracting maps $S_1,S_2:$ $\mathbb{R}\to \mathbb{R}$, $S_1(x)=x/3-2/3$, $S_2=x/3+2/3$, where $x\in E$. Also $S^k:$ $S(S^{k-1}(E))=S^k(E)$, $S^0(E)=E$, where $S(E)=S_1(E)\cup S_2(E)$.
Thus if we iterate the map $S$ infinitely many times this will result in the well known Cantor Set; see figure \ref{fr1}.
This iteration procedure can be generalised by the following theorem \cite{Falconer_1990}:
\begin{Th}\label{th1} 
If we have $S_1,...,S_N:$ $|S_i(x)-S_i(y)|\leq c_i|x-y|$, $c_i<1$, then $\exists$ unique non-empty set $F:$ $F=\cup_{i=1}^{N}S_i(F)$, hence invariant for the map $S$ and $F=\cap_{k=1}^{\infty}S^k(E)$ 
\end{Th}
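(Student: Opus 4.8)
\section*{Proof proposal for Theorem \ref{th1}}

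The plan is to realise $F$ as the fixed point of a contraction on a complete metric space, and then to identify it with the decreasing intersection of the iterates $S^k(E)$. First I would pass from points to sets: let $\mathcal{K}$ denote the collection of non-empty compact subsets of the ambient space, equipped with the Hausdorff metric $d_H(A,B)=\max\{\sup_{a\in A}\inf_{b\in B}|a-b|,\ \sup_{b\in B}\inf_{a\in A}|a-b|\}$. A standard fact I would invoke (or quickly verify) is that $(\mathcal{K},d_H)$ is a complete metric space. Next I would check that the induced map $S(A)=\bigcup_{i=1}^N S_i(A)$ sends $\mathcal{K}$ into itself, using that each $S_i$ is continuous (hence maps compacts to compacts) and that a finite union of compact sets is compact.

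The core estimate is that $S$ is a contraction on $(\mathcal{K},d_H)$ with ratio $c=\max_i c_i<1$. Here I would use two elementary properties of the Hausdorff metric: $d_H(S_i(A),S_i(B))\le c_i\, d_H(A,B)$ for a $c_i$-Lipschitz map, and $d_H\!\left(\bigcup_i A_i,\bigcup_i B_i\right)\le \max_i d_H(A_i,B_i)$ for finite unions. Combining these gives $d_H(S(A),S(B))\le c\, d_H(A,B)$. The Banach fixed point theorem then yields a unique $F\in\mathcal{K}$ with $S(F)=F$, i.e. $F=\bigcup_{i=1}^N S_i(F)$, and uniqueness of the invariant non-empty (compact) set.

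Finally I would establish the representation $F=\bigcap_{k=1}^\infty S^k(E)$. Assuming, as in the Cantor construction, that $E$ is chosen with $S(E)\subseteq E$ (one can always enlarge $E$ to a closed ball so that this holds, since the $S_i$ are contractions), monotonicity of $S$ gives a nested sequence $E\supseteq S(E)\supseteq S^2(E)\supseteq\cdots$ of non-empty compact sets, so $F':=\bigcap_{k\ge1}S^k(E)$ is non-empty and compact. One checks $S(F')=F'$ directly from the nesting (using compactness for the inclusion $F'\subseteq S(F')$), and then uniqueness of the invariant set forces $F'=F$. Alternatively, and perhaps more cleanly, I would show $d_H(S^k(E),F)\to 0$: since $F=S^k(F)$ and $d_H(S^k(E),S^k(F))\le c^k d_H(E,F)\to 0$, the iterates converge to $F$ in $\mathcal{K}$, and for a decreasing sequence of compacts Hausdorff-convergence to the intersection is automatic, giving $F=\bigcap_k S^k(E)$.

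I expect the main obstacle to be the two Hausdorff-metric inequalities underlying the contraction estimate — in particular the behaviour of $d_H$ under finite unions — since everything else is either a citation (completeness of $\mathcal{K}$, the Banach fixed point theorem) or a short monotonicity argument; these inequalities are elementary but require careful unwinding of the $\sup$–$\inf$ definition of $d_H$.
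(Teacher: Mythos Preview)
Your argument is correct and is the standard Hutchinson--Banach proof of this result. Note, however, that the paper does not supply its own proof of Theorem~\ref{th1}: it simply states the theorem and cites Falconer~\cite{Falconer_1990}. What you have written is essentially the proof one finds in that reference (completeness of the space of non-empty compacts under the Hausdorff metric, the union-of-contractions map as a contraction with ratio $\max_i c_i$, Banach's fixed point theorem, and convergence of $S^k(E)$ to $F$). So there is no alternative approach in the paper to compare against; your proposal matches the cited source, and your handling of the hypothesis $S(E)\subseteq E$ --- which the theorem statement leaves implicit --- is appropriate.
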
  
\begin{figure}[hbp]
\begin{center}
\begin{picture}(140,70)(0,0)
\put(0,0){
\put(0,0){\includegraphics{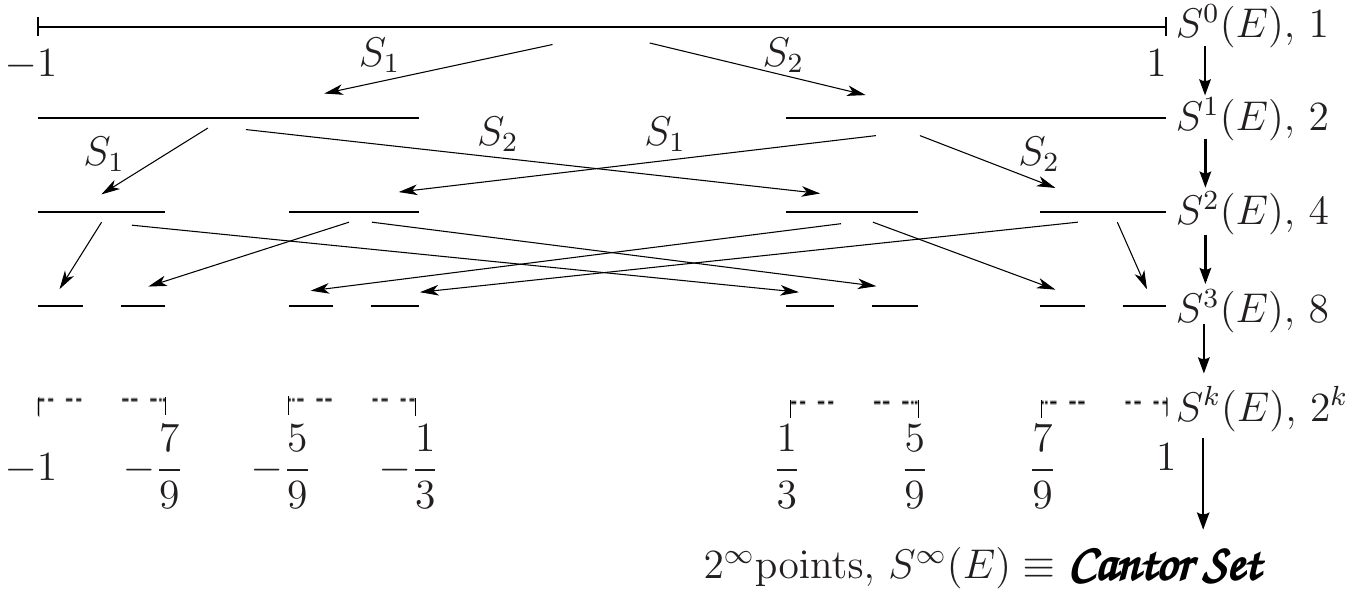}}
}
\end{picture}
\end{center}
\vspace{0.0cm}
\caption{Sketch of the repeated actions of the maps $S_1$ and $S_2$ on the interval $E$ that result in the Canator Set, where the left and right arrows represent $S_1$ and $S_2$, respectively. 
On the right hand side of the intervals, the corresponding iterations of the map $S$ and the number of the intervals of the particular iteration are shown.}\label{fr1}
\end{figure}
Using a polygon as an initial set for fractal generation is a well known technique since the most famous strictly self-similar fractal examples, the Cantor set and the Sierpinski triangle, consist of infinitely many line-segments and triangles, respectively.
In the present paper the number of the vertices of the polygon will be increased to an arbitrary number $n\in\mathbb{N}, n\geq 2$.
Thus, the fractal can consist of infinitely many pentagrams, hexagons, etc.  
Furthermore, the building regular polygons will be all $\{n/m\}$ star-polygons \cite{Coxeter_1947} where $n\geqslant 2$ and $m\leqslant n/2$, $n\in\mathbb{N}$, and $m\in\mathbb{N}$.
For our purpose we will take the unit circle and it will be divided in $n$ equal segments. 
For example, in the case of pentagram, we have $m=2$ which gives us $\{5/2\}$-polygon.   
Once we choose an $\{n/m\}$-polygon it can be scaled by a factor of $P\in(0,1)$ with respect to all of the vertices of the polygon.
This will produce $n$ new polygons similar to the initial one but scaled down by factor of $P$.
Now, if we repeat the procedure for each one of these new polygons, another $n^2$ polygons will be created that will be $P^2$ times smaller than the initial $\{n/m\}$-polygon inscribed in the unit circle.  
If the $P$ is chosen carefully, after infinitely many contractions, the result will be a strictly self-similar fractal composed by non-intersecting polygons.  
Thus, at the $i$-th contraction the defined $n$ contracting maps are applied $n^i$ times, and by theorem \ref{th1} when $i\rightarrow\infty$ we will define infinitely many points of attraction.
These points of attraction specify the attractor of the iteration procedure.
This polygonal attractor is a fractal produced by the infinite contractions ($n^i$, where $i\rightarrow\infty$) of the initial polygon and it is self similar, i.e. it is composed of infinitely many polygons similar to the initial one.   

The present study is focused on non-self-intersecting fractals where the scaled copies of the initial polygons osculate with each other.
This restricts the possibilities for $P$ when Sierpinsky $n$-gon \cite{Dennis_1995} for arbitrary $n$ is constructed and a formula about $P$ is derived.
This important ratio is reported in two other places \cite{Dennis_1995} and \cite{Zuhair_2009} where in the latter the proof is omitted.
Moreover, in both manuscripts the authors did not prove that the  Hausdorff dimension of the non-self-intersecting $\infty$-gon is 1. 
In the present paper, an original derivation of the equation for the scaling ratio $P$ is presented.
It is done in a very detailed way by using simple geometric laws which makes the result affordable even for high-school students.
Also, the Hausdorff dimension of the $\infty$-gon is shown to be 1.
Furthermore, universal constructions for $n$-flakes are proposed for the cases when $n$ is even or odd, and the Hausdorff dimension of the $\infty$-flake is proved to be 2.
To this end, formulas for the scaling ratio and the rotation of the central polygon of the $n$-flake are derived, which to the knowledge of the author have not been reported previously.  
Finally, it is shown that different initial polygons may result in an identical attractor when an IFS iteration scheme is applied and it is shown which are the main parameters that define the shape of the Sierpinski $n$-gon and the polyflake attractors.

The paper is constructed as follows: in section \ref{sec2} the parameters $P$ and $m$ are introduced and an important equation for the ratio $P=P(n,m)$ is derived.
In section \ref{sec3} a condition for $m$ is obtained that ensures no self-intersection of the studied class of fractals. 
Then, two techniques for imaging IFS attractors are introduced and several Sierpinsky $n$-gons are computed together with their dimensions. 
In section \ref{sec4} the possibility for additional scaling map that scales down towards the centre of the polygon is taken into account. 
Different constructions for the cases when $n$ is odd or even are proposed so the resulted $n$-flake to be non-self-intersecting for arbitrary $n$. 
Also, a few interesting examples are given and the Hausdorff dimension of the $\infty$-flake is computed.
In section \ref{sec5} some well known fractals are shown to be a special case of the fractal generation scheme shown here.
It is also explained why identical attractors may originate from different star-polygons and how we can exploit this feature.  

\newpage 
\section{The parameters $P$ and $m$}\label{sec2}
An important result of the present paper will be explained in this section. 
Here the scaling parameter $P$ will be deduced from $n$ and $m$.
Therefore, $P=P(n,m)$ is a specific scaling factor for the chosen initial $\{n/m\}$-polygon, where $P$ does not depend on the diameter of the circumscribed circle.

In figure \ref{fr2} a sketch of a $\{n/3\}$ star-polygon is shown where $m=3$. 
Here, the vertices $A_i$ for $i=1,...,n$ are the vertices of the $\{n/3\}$ star-polygon and $O_a$ is the centre of the circumscribed circle $\cal{S_{\text{a}}}$, $M$ is the intersection point of the secants $A_1A_4$ and $A_3A_6$, $H$ is the orthogonal projection of $O_a$ on $A_1A_4$ and $L$ is the orthogonal projection of $O_a$ on $A_3A_4$.
Our purpose will be to find the ratio $P=\displaystyle\frac{MA_4}{A_1A_4}$ because $MA_4$ will be a line segment of the star-polygon resulted after the scaling of the initial polygon with respect to the point $A_4$ by factor of $P$.

\begin{figure}[t]
\begin{center}
\begin{picture}(140,70)(0,0)
\put(0,0){
\put(0,0){\includegraphics{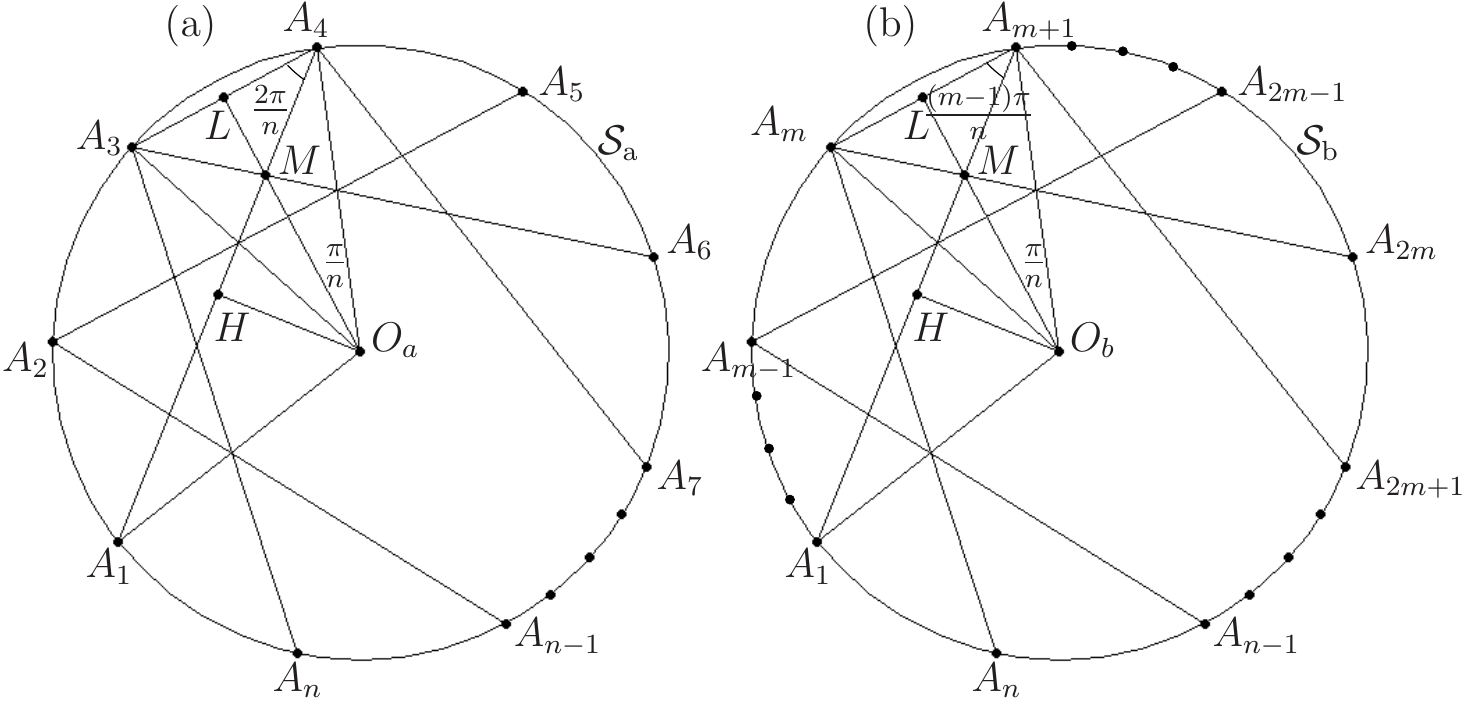}}
}
\end{picture}
\end{center}
\vspace{-0.5cm}
\caption{Sketches of \{n/3\} and \{n/m\}  star-polygons circumscribed in $\cal{S_{\text{a}}}$ and $\cal{S_{\text{b}}}$ respectively.}\label{fr2}
\end{figure}

 The initial polygon is circumscribed by $\cal{S_{\text{a}}}$ and $O_a$ is its centre thus $\measuredangle{A_3O_aA_4}=2\pi/n \Rightarrow \measuredangle{LO_aA_4}=\pi/n$ because $A_3O_a=A_4O_a$. 
Also, $\measuredangle{A_1O_aA_3}=4\pi/n \Rightarrow \measuredangle{A_1A_4A_3}=2\pi/n$ and $\measuredangle{A_1O_aH}=3\pi/n$, since $\cal{S_{\text{a}}}$ with centre at $O_a$ circumscribes $A_1$, $A_3$ and $A_4$.
Now we can deduce $A_1A_4$ and $A_4M$ by the radius $r$ of $\cal{S_{\text{a}}}$; $r=O_aA_i$ for $i=1,...,n$ : 

$A_1A_4=2r\sin(3\pi/n)$. 
In order to deduce $A_4M$ we will first find $A_4L$. 
Thus, $A_4L=r\sin(\pi/n) \Rightarrow A_4M=\displaystyle\frac{A_4L}{\cos(2\pi/n)}=\displaystyle\frac{r\sin(\pi/n)}{\cos(2\pi/n)}$.
Now we can substitute the values for $A_1A_4$ and $MA_4$ in order to find $P=\displaystyle\frac{MA_4}{A_1A_4}=\displaystyle\frac{\sin(\pi/n)}{2\cos(2\pi/n)\sin(3\pi/n)}$.

We have just found $P(n,3)$, now, let us do the same computations for any $1\leq m \leq n/2$. 
In figure \ref{fr2}(b) we can find the sketch of a $\{n/m\}$ star-polygon where the vertices $A_i$ for $i=1,...,n$ are the vertices of the star-polygon and $O_b$ is the centre of the circumscribed circle $\cal{S_{\text{b}}}$ with radius $r$, $M$ is the intersection point of the secants $A_1A_{m+1}$ and $A_{m}A_{2m}$, $H$ is the orthogonal projection of $O_b$ on $A_1A_{m+1}$ and $L$ is the orthogonal projection of $O_b$ on $A_{m}A_{m+1}$. 
Our purpose will be to find the ratio $P=\displaystyle\frac{MA_{m+1}}{A_1A_{m+1}}$.
Thus, $\measuredangle{LO_bA_{m+1}}=\pi/n$ because $A_{m}O_b=A_{m+1}O_b$, $\measuredangle{A_1O_bA_{m}}=(2m-2)\pi/n \Rightarrow \measuredangle{A_1A_{m+1}A_{m}}=(m-1)\pi/n$ and $\measuredangle{A_1O_bH}=m\pi/n$, since $\cal{S_{\text{b}}}$ with centre at $O_b$ circumscribes $A_1$, $A_{m}$ and $A_{m+1}$.
Now we can deduce $A_1A_{m+1}$ and $A_{m+1}M$ by the radius $r$ of $\cal{S_{\text{b}}}$: 
$A_1A_{m+1}=2r\sin(m\pi/n)$. 
In order to deduce $A_{m+1}M$ we will first find $A_{m+1}L$. 
Thus, $A_{m+1}L=r\sin(\pi/n) \Rightarrow A_{m+1}M=\displaystyle\frac{A_{m+1}L}{\cos((m-1)\pi/n)}=\displaystyle\frac{r\sin(\pi/n)}{\cos((m-1)\pi/n)}$
Now we can substitute the values for $A_1A_{m+1}$ and $MA_{m+1}$ in order to find:\\
\footnotesize\begin{equation}\label{eq1}
P=\displaystyle\frac{MA_{m+1}}{A_1A_{m+1}}=\displaystyle\frac{\sin(\pi/n)}{2\cos((m-1)\pi/n)\sin(m\pi/n)}
\end{equation}
\normalsize
\begin{figure}[t]
\begin{center}
\begin{picture}(140,75)(0,0)
\put(0,0){
\put(-10,0){\includegraphics{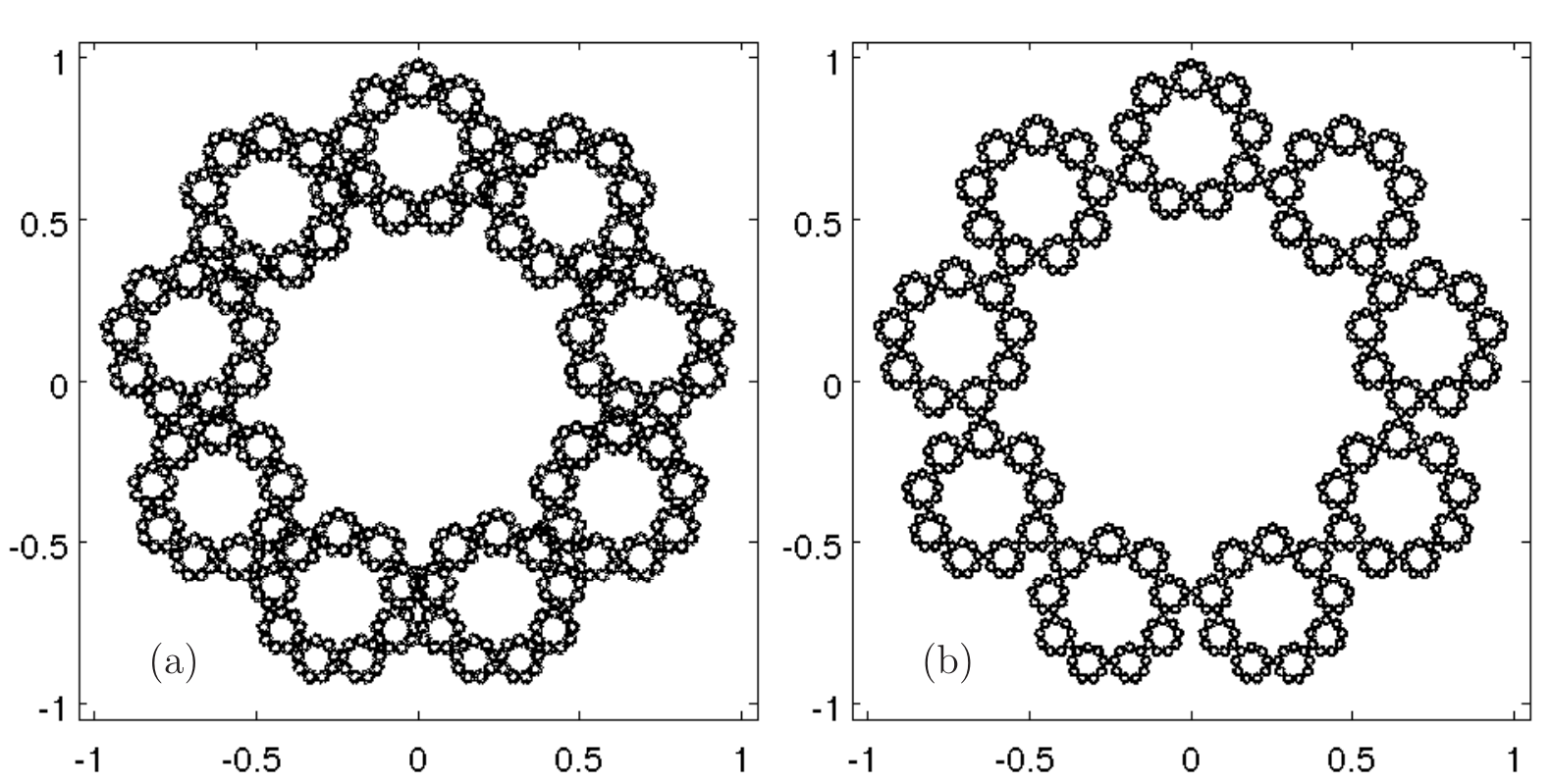}}
}
\end{picture}
\end{center}
\vspace{-0.5cm}
\caption{IFS generated fractal sets made of points that lie on \{9/2\} and \{9/3\} star-polygons in subpanels (a) and (b), respectively.}\label{fr3}
\end{figure}


\section{Generation of fractals by using IFS}\label{sec3}
The values for $P$ obtained in the previous section will be used here for the computation of self-similar fractals that are IFS attractors.
These attractors will be derived by the random walk/orbit method or so called chaos game \cite{Devaney_1990,Peitgen_2004}.
First, we define a matrix that specifies how many points along the unit circle will be taken into account ($n$), and what is the contraction $P(n,m)$ towards those points.
For example, the matrices for the fractals in figures \ref{fr3}(a) and \ref{fr3}(b) are bellow; see Table \ref{T1}.

\begin{table}
\caption{The two matrices $M_{\{9/2\}}$ and $M_{\{9/3\}}$ where each of them defines nine contracting maps (every two rows define a map), needed for the random IFS procedure, resulting in the \{9/2\} and \{9/3\} fractal attractors shown in figure \ref{fr3}(a) and \ref{fr3}(b).}
\begin{center}
\footnotesize
\begin{tabular}{|p{1cm}|p{1cm}|p{1.5cm}||p{1cm}|p{1cm}|p{1.5cm}|}
\hline \multicolumn{3}{|c||}{$M_{\{9/2\}}$} & \multicolumn{3}{|c|}{$M_{\{9/3\}}$} \\
\hline   0.2831 & 0 & 0.8660254 & 0.2578 & 0 & 0.8660254\\
\hline   0 & 0.2831 & -0.5 & 0 & 0.2578 & -0.5\\
\hline   0.2831 & 0 & 0.9848078 & 0.2578 & 0 & 0.9848078\\
\hline   0 & 0.2831 & 0.1736482 & 0 & 0.2578 & 0.1736482\\
\hline   0.2831 & 0 & 0.6427876 & 0.2578 & 0 & 0.6427876\\
\hline   0 & 0.2831 & 0.7660444 & 0 & 0.2578 & 0.7660444\\
\hline   0.2831 & 0 & 0 & 0.2578 & 0 & 0\\
\hline   0 & 0.2831 & 1 & 0 & 0.2578 & 1\\
\hline   0.2831 & 0 & -0.642788 & 0.2578 & 0 & -0.642788\\
\hline   0 & 0.2831 & 0.7660444 & 0 & 0.2578 & 0.7660444\\
\hline   0.2831 & 0 & -0.984808 & 0.2578 & 0 & -0.984808\\
\hline   0 & 0.2831 & 0.1736482 & 0 & 0.2578 & 0.1736482\\
\hline   0.2831 & 0 & -0.866025 & 0.2578 & 0 & -0.866025\\
\hline   0 & 0.2831 & -0.5 & 0 & 0.2578 & -0.5\\
\hline   0.2831 & 0 & -0.34202 & 0.2578 & 0 & -0.34202\\
\hline   0 & 0.2831 & -0.939693 & 0 & 0.2578 & -0.939693\\
\hline   0.2831 & 0 & 0.3420201 & 0.2578 & 0 & 0.3420201\\
\hline   0 & 0.2831 & -0.939693 & 0 & 0.2578 & -0.939693\\
\hline
\end{tabular}
\normalsize
\end{center}
\end{table}\label{T1}
\vspace{0.4cm}

This matrix is then plugged into the random generator, where the number of points that we want to map over the IFS attractor are specified; see the Appendix section \ref{app} for the MATLAB code.
\subsection{Condition for non-self-intersection}
In figure \ref{fr3} two examples of star-polygon fractals with initial $\{9/2\}$- and $\{9/3\}$-polygons clarify why the parameter $m$ in the ratio $\displaystyle\frac{MA_{m+1}}{A_1A_{m+1}}$ is important when non-self-intersecting fractals are desired.
We would like the self-intersection of the resulted sets to be prevented, thus, we will state the following theorem.

\begin{Th}\label{V1}
If we have a strictly self-similar fractal set obtained as an attractor of IFS, where $n$-attracting points lie on $S^1$, so that they are the vertices of a $\{n/m\}$ star-polygon, and where the attraction towards these points is $P=P(n,m)$ given by equation (\ref{eq1}), then this fractal set is not-self-intersecting if and only if $m\in[n/4,n/4+1]$, which uniquely defines $P$ for a given $n$.
\end{Th}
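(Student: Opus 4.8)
\emph{Proof proposal.} The plan is to collapse the infinite self-intersection question to one explicit trigonometric inequality in $n$ and $m$ and then solve it. First I would pass to the first generation. Writing the $n$ homotheties as $S_i(x)=A_i+P\,(x-A_i)$ and letting $T$ be the filled $\{n/m\}$ star-polygon inscribed in the unit circle, Theorem~\ref{th1} gives $F=\bigcup_{i=1}^{n}S_i(F)$; since the first-generation copies sit inside $T$ (a direct check in the admissible range), the polygonal pieces of $F$ are nested along every branch, so by strict self-similarity the infinitely many star-polygons composing $F$ osculate pairwise \emph{if and only if} the $n$ copies $S_1(T),\dots,S_n(T)$ already have pairwise disjoint interiors. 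The problem is thereby reduced to $n$ congruent star-shaped plane regions.

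Next I would exploit the dihedral symmetry to obtain a half-plane criterion. For $k=1,\dots,\lfloor n/2\rfloor$, the reflection $\sigma_k$ in the line $\ell_k$ through the centre $O$ and the midpoint of an arc of $S^1$ cut off by $k$ consecutive vertices is a symmetry of the first-generation picture that interchanges two of the copies; for $k=1$ this is exactly the reflection fixing the point $M$ used to define $P$ in~(\ref{eq1}). Hence each such pair of copies consists of mirror images across $\ell_k$. Using that a regular star-polygon is a star domain with respect to its centre --- so the open segment from the centre of a copy to any of its boundary points lies in that copy's interior --- I would show that two interchanged copies $C,\sigma_k(C)$ have disjoint interiors \emph{iff} $C$ lies in the closed half-plane $H_k$ bounded by $\ell_k$ on the side of $C$'s centre: ``if'' is immediate, and for ``only if'', if some vertex $v$ of $C$ lies strictly beyond $\ell_k$ then the point where the segment from $v$ to the centre of $C$ meets $\ell_k$ is interior to both $C$ and $\sigma_k(C)$. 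Since $C$ lies in the convex hull of its vertices, $C\subseteq H_k$ amounts to the finitely many linear conditions that every vertex $S_i(A_j)$ of $C$ lie in $H_k$.

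Then I would evaluate and solve these conditions. Placing the two consecutive vertices bounding the arc symmetrically about the real axis, $C\subseteq H_k$ becomes
\[
(1-P)\sin(k\pi/n)\ \ge\ P\cos(D_k\pi/n),
\]
where $D_k\in\{0,\tfrac12,1\}$ is the distance from $n/2$ to the nearest integer of the same parity as $k$ and the binding vertex is the one of $C$ aimed most directly across $\ell_k$. Because $\sin(\pi/n)\le\sin(k\pi/n)$ for every $k\ge1$ (with a short extra estimate when $4\mid n$ and $k$ even), the $k=1$ constraint is the strongest and implies all the others, so the whole matter reduces to it. Rewriting~(\ref{eq1}) as $P=\sin(\pi/n)\big/\big(\sin((2m-1)\pi/n)+\sin(\pi/n)\big)$, the $k=1$ inequality simplifies to $\sin((2m-1)\pi/n)\ge\cos(D_1\pi/n)$, i.e.\ $2m-1$ must lie within distance $D_1$ of $n/2$; equivalently $m\in\big[\tfrac n4+\tfrac{1-D_1}{2},\,\tfrac n4+\tfrac{1+D_1}{2}\big]$, an interval contained in $[\tfrac n4,\tfrac n4+1]$ and, after a parity check, containing exactly the same integers as $[\tfrac n4,\tfrac n4+1]$. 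That is a single integer $m$, except when $4\mid n$, where the two values $m=n/4$ and $m=n/4+1$ occur but satisfy $m+m'=\tfrac n2+1$ and hence give the same $P$ by the symmetry $P(n,m)=P(n,\tfrac n2+1-m)$ visible in~(\ref{eq1}); so $P$ is uniquely pinned down in every case.

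The step I expect to be the main obstacle is the second one: turning the genuinely two-dimensional question ``do these two star-shaped polygons overlap?'' into the clean half-plane/vertex criterion. The ``only if'' direction is the delicate part --- one must be sure that a copy poking past $\ell_1$ really forces a positive-area intersection with its mirror image, which is exactly where star-shapedness about the centre is used --- together with the claim that no non-adjacent pair ($k\ge2$) can fail while the adjacent pair ($k=1$) merely osculates. Once Step~2 is secured, Step~3 is a routine trigonometric simplification plus elementary parity bookkeeping.
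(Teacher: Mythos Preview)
Your proposal is correct and reaches the same condition $m\in[n/4,n/4+1]$, but by a noticeably more systematic route than the paper. The paper argues purely synthetically at a single point: it takes the common point $M$ of two \emph{adjacent} scaled copies, computes the angles $\measuredangle O_bMA_{m+1}=\tfrac{\pi}{2}+(m-1)\tfrac{\pi}{n}$ and $\measuredangle A_{m+1}MA'=(n-m-1)\tfrac{\pi}{n}$ by elementary circle geometry, and reads off the two inequalities $m\ge n/4$ and $m\le n/4+1$ directly from the requirement that two specific extremal vertices $A'$, $A''$ of one copy not cross the bisector $O_bL$. It never discusses non-adjacent pairs, never articulates a half-plane or star-shapedness criterion, and never spells out the reduction from the full attractor to the first generation. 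For uniqueness when $4\mid n$ the paper expands $P(4v,v)$ and $P(4v,v+1)$ with addition formulas and shows both equal $\sin(\pi/4v)\big/\big(\cos(\pi/4v)+\sin(\pi/4v)\big)$, whereas you invoke the symmetry $P(n,m)=P(n,\tfrac{n}{2}+1-m)$ obtained from the product-to-sum rewrite of~(\ref{eq1}). What your approach buys is rigour on precisely the points the paper leaves tacit --- the first-generation reduction, the ``only if'' direction via star-shapedness, and the verification that the adjacent pair $k=1$ dominates all $k\ge 2$ --- at the price of the $D_k$ parity bookkeeping and the extra estimate for $4\mid n$ with $k$ even. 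What the paper's approach buys is a short, picture-driven argument that can be followed entirely from figure~\ref{fr4}.
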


\begin{proof}
For clarity, one must look at figure \ref{fr4} where with red is denoted the scaled down polygon towards $A_{m+1}$, self-similar to the original one.
Although, it has $9$ vertices, it must be considered as ${n/m}$ star-polygon, because we will only use geometrical properties that are independent of $n$ and $m$. 
For this purpose we must find out the following angles: $\measuredangle{A'MA_{m+1}}$ and $\measuredangle{O_bMA_{m+1}}.$ 
\begin{figure}
\begin{center}
\begin{picture}(140,80)(0,0)
\put(0,0){
\put(0,0){\includegraphics{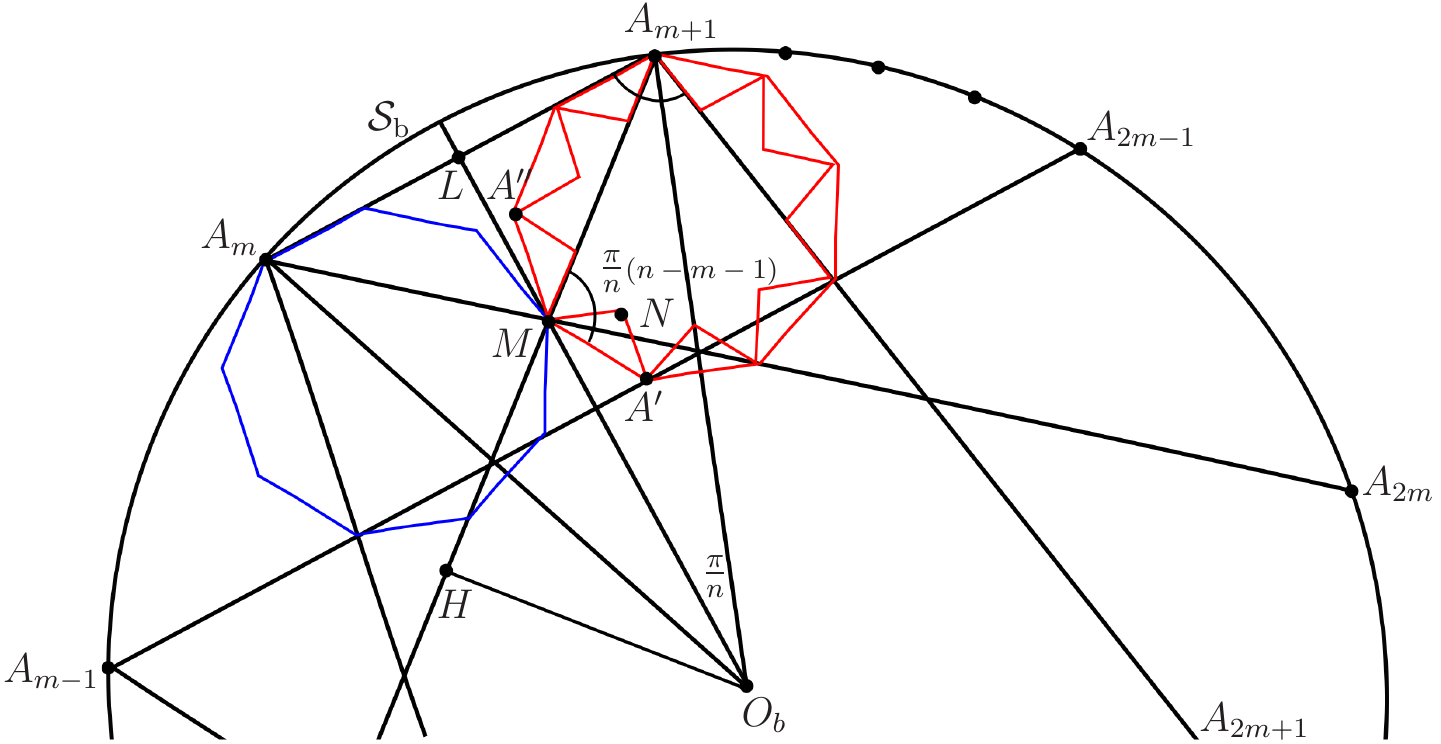}}
}
\end{picture}
\end{center}
\vspace{-0.5cm}
\caption{Sketch of \{n/m\} star-polygon circumscribed in $\cal{S_{\text{b}}}$ together with the scaled down polygon towards the $A_{m+1}$ vertex (in red) and its mirror image across the line $O_bL$ (in blue) which is equivalent to the scaled down polygon towards the $A_{m}$ vertex.}\label{fr4}
\end{figure}

We already found that $\measuredangle{LA_{m+1}M}=(m-1)\pi/n$ and therefore\\
$\measuredangle{MA_{m+1}O_{b}}=\measuredangle{O_bA_{m+1}A_{2m+1}}=\pi/2-\pi/n-(m-1)\pi/n$ $\Rightarrow$ \\
$\Rightarrow$ $\measuredangle{LA_{m+1}A_{2m+1}}=2(\pi/2-\pi/n-(m-1)\pi/n)+(m-1)\pi/n=\pi-2\pi/n-(m-1)\pi/n$ $\Rightarrow$\\
$\Rightarrow$ $\measuredangle{LA_{m+1}A_{2m+1}}=\displaystyle\frac{\pi}{n}(n-m-1)$.
On the other hand due to the symmetry of the scaled polygon (in red) $\measuredangle{LA_{m+1}A_{2m+1}}=\measuredangle{A_{m+1}MA'}$ since  
$\measuredangle{A_{m+1}MN}=\measuredangle{MA_{m+1}A_{2m+1}}$ and $\measuredangle{NMA'}=\measuredangle{MA_{m+1}L}$.
$\measuredangle{O_bMA_{m+1}}=\pi/2+(m-1)\pi/n$ as an exterior angle for $\triangle MLA_{m+1}$.

Now, note that the scaled polygon towards the $A_{m+1}$ vertex (in red) reflected across the $O_bL$ line segment will result in the scaled polygon towards the $A_{m}$ vertex (in blue); see also figure \ref{fr2}. 
This is true because the $O_bL$ represents an axis of symmetry for the initial polygon and it contains point $M$ which is a common point for both the scaled polygons towards the $A_{m}$ and $A_{m+1}$ vertices. 
Now we can deduce that these two scaled polygons will not intersect with each other if the vertices $A'$ and $A''$ stay together with $A_{m+1}$ on the same side with respect to the $O_bL$ axis. 
Then, $\measuredangle{O_bMA_{m+1}}\geq\measuredangle{A_{m+1}MA'}$ and $\measuredangle{LMA_{m+1}}\geq\measuredangle{A_{m+1}MA''}$ $\Rightarrow$\\
\vspace{-0.4cm}
\footnotesize
\begin{align}\label{eq2}
\measuredangle{O_bMA_{m+1}}\geq\measuredangle{A_{m+1}MA'} ~~~&~~~ \measuredangle{LMA_{m+1}}\geq\measuredangle{A_{m+1}MA''}\\
\pi/2+(m-1)\pi/n\geq\pi-2\pi/n-(m-1)\pi/n ~~~&~~~ \pi/2-(m-1)\pi/n\geq (m-1)\pi/n \nonumber \\
(4m-4)/2n\geq (n-4)/2n ~~~&~~~ 1/2\geq 2(m-1)/n \nonumber \\
m\geq n/4 ~~~&~~~ n/4+1 \geq m \nonumber
\end{align}
\normalsize
Thus, if the resulted fractal does not self-intersect, then $n/4\geq m\geq n/4+1$, which uniquely defines $m$ except when $n$ can be divided by $4$ without residual. 
At the same time, if the strict inequalities of equations (\ref{eq2}) hold, this ensures that both of the vertices $A'$ and $A''$ stay on the right hand side of $O_bL$ line segment (see figure \ref{fr4}) $\Rightarrow$ their mirror images with respect to $O_bL$ will stay on the left hand side of $O_bL$.
Otherwise, if $A'$ and $A''$ were to cross the $O_bL$ segment becoming on its left hand side, the two polygons would intersect with each other and this intersection would be repeated everywhere since the resulted fractal is strictly self-similar.
Finally, if $A'$ or $A''$ lies on $O_bL$ and the two scaled polygon have a common side $MA'$ or $MA''$, then one of the equations (\ref{eq2}) must be with a sign for equality.
\footnotesize
\begin{align}\label{eq3}
P=\frac{\sin(\frac{\pi}{n})}{2\cos(\frac{(m-1)\pi}{n})\sin(\frac{m\pi}{n})} \nonumber \\
P=P_1(n,m)~ \text{for} ~n=4v~ \text{and} ~m=v ~~&~~ P=P_2(n,m)~ \text{for} ~n=4v~ \text{and} ~m=v+1 \nonumber \\
P_1=\frac{\sin(\frac{\pi}{4v})}{2\cos(\frac{(v-1)\pi}{4v})\sin(\frac{v\pi}{4v})} ~~&~~ P_2=\frac{\sin(\frac{\pi}{4v})}{2\cos(\frac{v\pi}{4v})\sin(\frac{(v+1)\pi}{4v})} \nonumber \\
P_1=\frac{\sin(\frac{\pi}{4v})}{2\cos(\frac{(v-1)\pi}{4v})\sin(\frac{\pi}{4})} ~~&~~ P_2=\frac{\sin(\frac{\pi}{4v})}{2\cos(\frac{\pi}{4})\sin(\frac{(v+1)\pi}{4v})} \nonumber \\
P_1=\frac{\sin(\frac{\pi}{4v})}{\sqrt{2}\cos(\frac{(v-1)\pi}{4v})} ~~&~~ P_2=\frac{\sin(\frac{\pi}{4v})}{\sqrt{2}\sin(\frac{(v+1)\pi}{4v})} \nonumber \\
P_1=\frac{\sin(\frac{\pi}{4v})}{\sqrt{2}\cos(\frac{v\pi}{4v})\cos(\frac{\pi}{4v})+\sqrt{2}\sin(\frac{v\pi}{4v})\sin(\frac{\pi}{4v})} ~~&~~ P_2=\frac{\sin(\frac{\pi}{4v})}{\sqrt{2}\sin(\frac{v\pi}{4v})\cos(\frac{\pi}{4v})+\sqrt{2}\cos(\frac{v\pi}{4v})\sin(\frac{\pi}{4v})} \nonumber \\
P_1=\frac{\sin(\frac{\pi}{4v})}{\cos(\frac{\pi}{4v})+\sin(\frac{\pi}{4v})} ~~&~~ P_2=\frac{\sin(\frac{\pi}{4v})}{\cos(\frac{\pi}{4v})+\sin(\frac{\pi}{4v})}  
\end{align}
\normalsize 
Therefore, $n$ must be divided by $4$ without residual because $m\in\mathbb{N}$.
Thus, when $n$ is divided by $4$ without residual and the scaling ratio is $P=P(n,m)$ from equations (\ref{eq2}), then each of the scaled polygons has two common vertices with both of the adjacent scaled polygons. 

The case when $n$ is divided by $4$ without residual and $m\in[n/4,n/4+1]$ implies that equation (\ref{eq1}) produces two values for $P$.
We will compute those values $P(4v,v)$ and $P(4v,v+1)$, where $n=4v$ for some $v>0, v\in\mathbb{N}$; see equations (\ref{eq3}). 
Equations (\ref{eq3}) clearly show that $P(n,n/4)=P(n,n/4+1)$, when $4$ divides $n$ without residual, which ensures the unique definition of $P(n,m)$ when the resulted fractals are non-self-intersecting. 
\end{proof}

\begin{Cor}
The attractors of the IFS with $n$ attracting points that are the vertices of \{n,n/4\} and \{n,n/4+1\} star-polygons and whose scaling ratios are P(n,n/4) and P(n,n/4+1), respectively, are identical and composed of star-polygons that have two vertices in common with each of the adjacent polygons.
\end{Cor}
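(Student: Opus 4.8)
The plan is to read the corollary off the proof of Theorem \ref{V1}, after one structural remark about the iterated function systems at play. The IFS attached to an $\{n/m\}$-polygon is the family of $n$ similarity maps $S_i\colon x\mapsto P(x-A_i)+A_i$, $i=1,\dots,n$, and each such map depends only on the \emph{set} of attracting vertices $\{A_1,\dots,A_n\}$ and on the ratio $P$; the index $m$ merely records which chords are drawn as edges of the star-polygon and is therefore invisible to the maps. For $m=n/4$ and for $m=n/4+1$ the attracting vertices are in both cases the same $n$ equally spaced points on $S^1$, and the computation in equations (\ref{eq3}) shows $P(n,n/4)=P(n,n/4+1)$ whenever $4$ divides $n$. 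Hence the two systems are literally the same family $\{S_1,\dots,S_n\}$, so Theorem \ref{th1} attaches to both the same unique invariant set $F$; this is the claimed coincidence of the two attractors.

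For the statement about common vertices I would return to the last part of the proof of Theorem \ref{V1}. With $n=4v$ and $m=v$ the first inequality in (\ref{eq2}) becomes an equality, which forces the vertex $A'$ of the scaled copy pointing at $A_{m+1}$ to lie on the symmetry axis $O_bL$; symmetrically, $m=v+1$ puts $A''$ on $O_bL$. Since $M$ also lies on $O_bL$, and reflection in $O_bL$ carries the scaled copy at $A_{m+1}$ onto the scaled copy at $A_m$, the whole segment $MA'$ (respectively $MA''$) is shared by these two neighbouring copies, so in particular they have the two vertices $M$ and $A'$ (respectively $A''$) in common. The $2\pi/n$ rotational symmetry of the first-generation configuration spreads this to every adjacent pair of scaled copies, and strict self-similarity of $F$ propagates it to all generations, which is exactly ``two vertices in common with each of the adjacent polygons''.

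The point I would treat most carefully is the phrase ``composed of star-polygons'': the invariant set $F$ is a genuine fractal rather than a finite union of polygons, so the assertion is really about the geometric iteration, i.e.\ about the $n^i$ scaled copies of the initial $\{n/m\}$-polygon produced at stage $i$, whose unions decrease to $F$. It is also worth recording, as in the proof of Theorem \ref{V1}, that only angle relations independent of $n$ and $m$ are used, so nothing is lost by drawing the figures with $9$ vertices. With these remarks in place the corollary requires no further computation.
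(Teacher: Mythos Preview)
Your proposal is correct and follows essentially the same route as the paper: the corollary is stated there without a separate proof, as an immediate consequence of the proof of Theorem~\ref{V1}, and you have simply made that derivation explicit---using equations~(\ref{eq3}) for $P(n,n/4)=P(n,n/4+1)$, the observation that the IFS maps depend only on the vertex set and on $P$ (a point the paper spells out later in Section~\ref{sec5}), and the equality cases of (\ref{eq2}) to place $A'$ or $A''$ on the axis $O_bL$. Your added care about what ``composed of star-polygons'' means for a genuine fractal is a clarification the paper does not provide.
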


\begin{Rem}
Note that theorem \ref{V1} is stated for the specific type of self-similar fractal sets obtained using IFS where the attracting points are the vertices of a $\{n/m\}$ star-polygon and $P$ is defined by $n$ and $m$ as given in equation (\ref{eq1}).
Thus, it does not exclude other kinds of non-self-intersecting star-polygonal fractal sets constructed in a different fashion.
\end{Rem}

\subsection{Fractal dimensions}\label{dimensions}
Theorem \ref{V1} allow us to define $F_{\{n/m\}}$ as the IFS attractor produced from an initial equilateral \{n/m\}-polygon with $n$ contracting maps that scale towards the $n$-vertices of the initial \{n/m\}-polygon with ratio $P(n,m)$. We ensured that the obtained self-similar fractal set $F_{\{n/m\}}$ is non-self-intersecting, which allows us to compute its Hausdorff dimension $dim_HF_{\{n/m\}}$ \cite{Falconer_1990} by solving the following equation:
\begin{equation}\label{eq4}
 \sum_{i=1}^Nc_i^{dim_HF_{\{n/m\}}}=1
\end{equation}
where $N$ indicates the amount of similarity maps $S_i$ (see Theorem \ref{th1}) and $0<c_i<1$ are the scaling ratios for each similarity. 

Since $P(2,1)=1/2$, $dim_HF_{\{2/1\}}=-ln(2)/ln(P(2,1))=1$ which means that the attractor of $F_{\{2/1\}}$ is the initial line segment that connects both vertices or one can think about the Cantor set with scale ratio $1/2$.

For the $F_{\{9/3\}}$ in figure \ref{fr3}(b) we have $9P(9,3)^{dim_HF_{\{9/3\}}}=1$ which lead to $dim_HF_{\{9/3\}}=-ln(9)/ln(P(9,3))\approx1.6207585335597825$. 

In the following figures \ref{fr5}, \ref{fr6}, \ref{fr7} and \ref{fr8} are shown the attractors of $F_{\{n/m\}}$ where $n=3,4,5,6,7,8,10,11,12,13,14,15,16,24$ and $m\in[n/4,n/4+1]$. 
The dimensions of the presented fractals are computed in the examples that follows every figure.
One can recognize well known fractals in the cases of $n=3,4,5,6$ but the other examples are not that famous due to the need of a special ratio in order to be constructed.

\newpage
\begin{figure}[hbp]
\begin{center}
\begin{picture}(140,160)(0,0)
\put(0,0){
\put(-10,0){\includegraphics{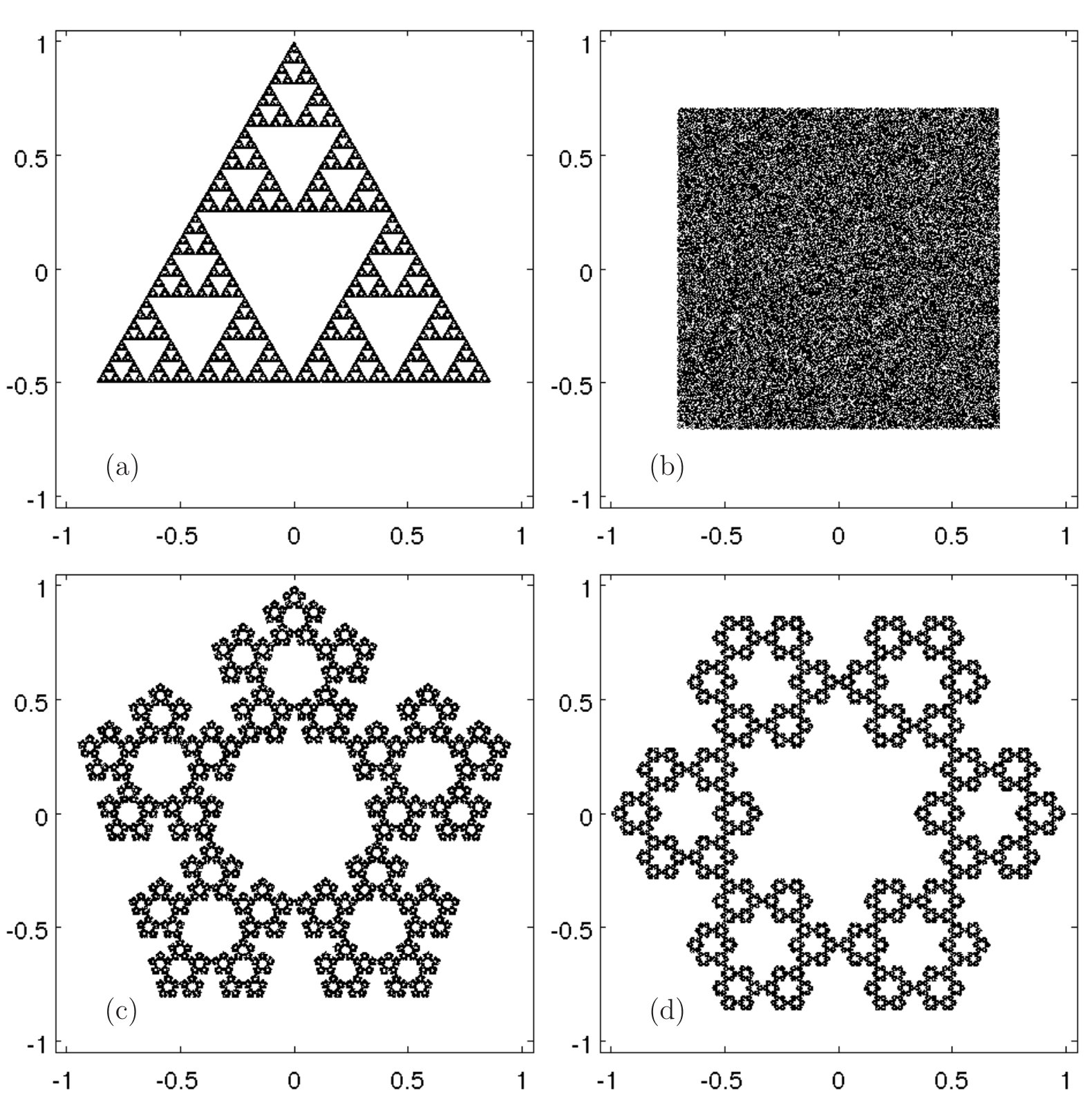}}
}
\end{picture}
\end{center}
\vspace{-0.5cm}
\caption{IFS generated fractal sets made of points that lie in $F_{\{3/1\}}$, $F_{\{4/2\}}$, $F_{\{5/2\}}$ and $F_{\{6/2\}}$ in subpanels (a), (b), (c) and (d), respectively.}\label{fr5}
\end{figure}
\begin{Ex}\label{ex1}
 We will compute the Hausdorff dimensions of the attractors shown in figure \ref{fr5}:
\begin{itemize}[noitemsep, topsep=0pt]
\item Figure \ref{fr5}(a) $dim_HF_{\{3/1\}}=-ln(3)/ln(P(3,1))\approx1.5849625007211563$ 
\item Figure \ref{fr5}(b) $dim_HF_{\{4/2\}}=-ln(4)/ln(P(4,2))=2$ 
\item Figure \ref{fr5}(c) $dim_HF_{\{5/2\}}=-ln(5)/ln(P(5,2))\approx1.6722759381845547$ 
\item Figure \ref{fr5}(d) $dim_HF_{\{6/2\}}=-ln(6)/ln(P(6,2))\approx1.6309297535714573$ 
\end{itemize}
\end{Ex}

\newpage
\begin{figure}[hbp]
\begin{center}
\begin{picture}(140,160)(0,0)
\put(0,0){
\put(-10,0){\includegraphics{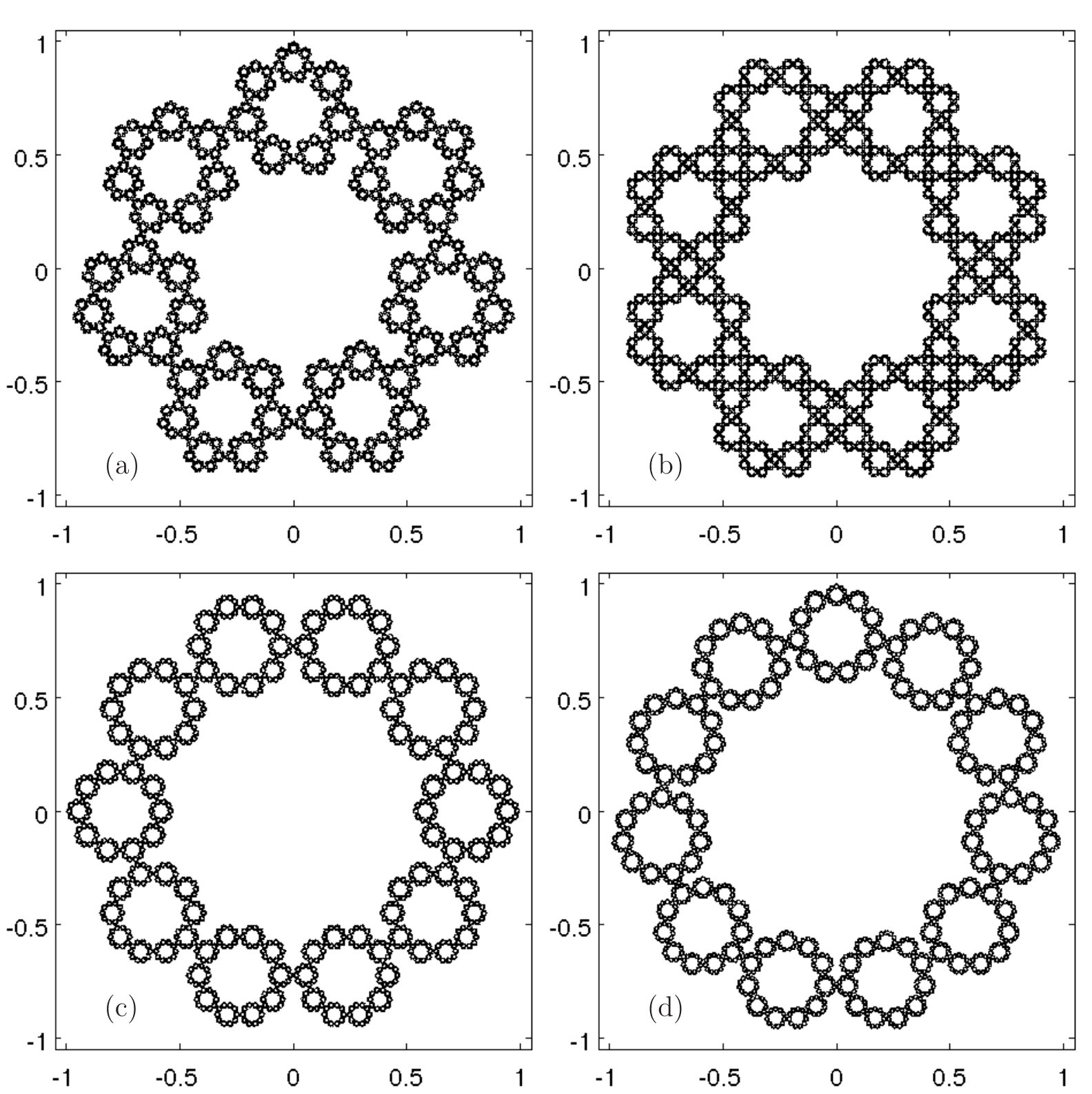}}
}
\end{picture}
\end{center}
\vspace{-0.5cm}
\caption{IFS generated fractal sets made of points that lie in $F_{\{7/2\}}$, $F_{\{8/3\}}$, $F_{\{10/3\}}$ and $F_{\{11/3\}}$ in subpanels (a), (b), (c) and (d), respectively. The IFS attractor of $F_{\{9/3\}}$ is shown in figure \ref{fr3}(b).}\label{fr6}
\end{figure}
\begin{Ex}\label{ex2}
 We will compute the Hausdorff dimensions of the attractors shown in figure \ref{fr6}: 
\begin{itemize}[noitemsep, topsep=0pt]
\item Figure \ref{fr6}(a) $dim_HF_{\{7/2\}}=-ln(7)/ln(P(7,2))\approx1.6522616056918107$ 
\item Figure \ref{fr6}(b) $dim_HF_{\{8/3\}}=-ln(8)/ln(P(8,3))\approx1.6934291475411138$ 
\item Figure \ref{fr6}(c) $dim_HF_{\{10/3\}}=-ln(10)/ln(P(10,3))\approx1.5949906555938886$ 
\item Figure \ref{fr6}(d) $dim_HF_{\{11/3\}}=-ln(11)/ln(P(11,3))\approx1.5911325154416658$ 
\end{itemize}
\end{Ex}

\newpage
\begin{figure}[hbp]
\begin{center}
\begin{picture}(140,160)(0,0)
\put(0,0){
\put(-10,0){\includegraphics{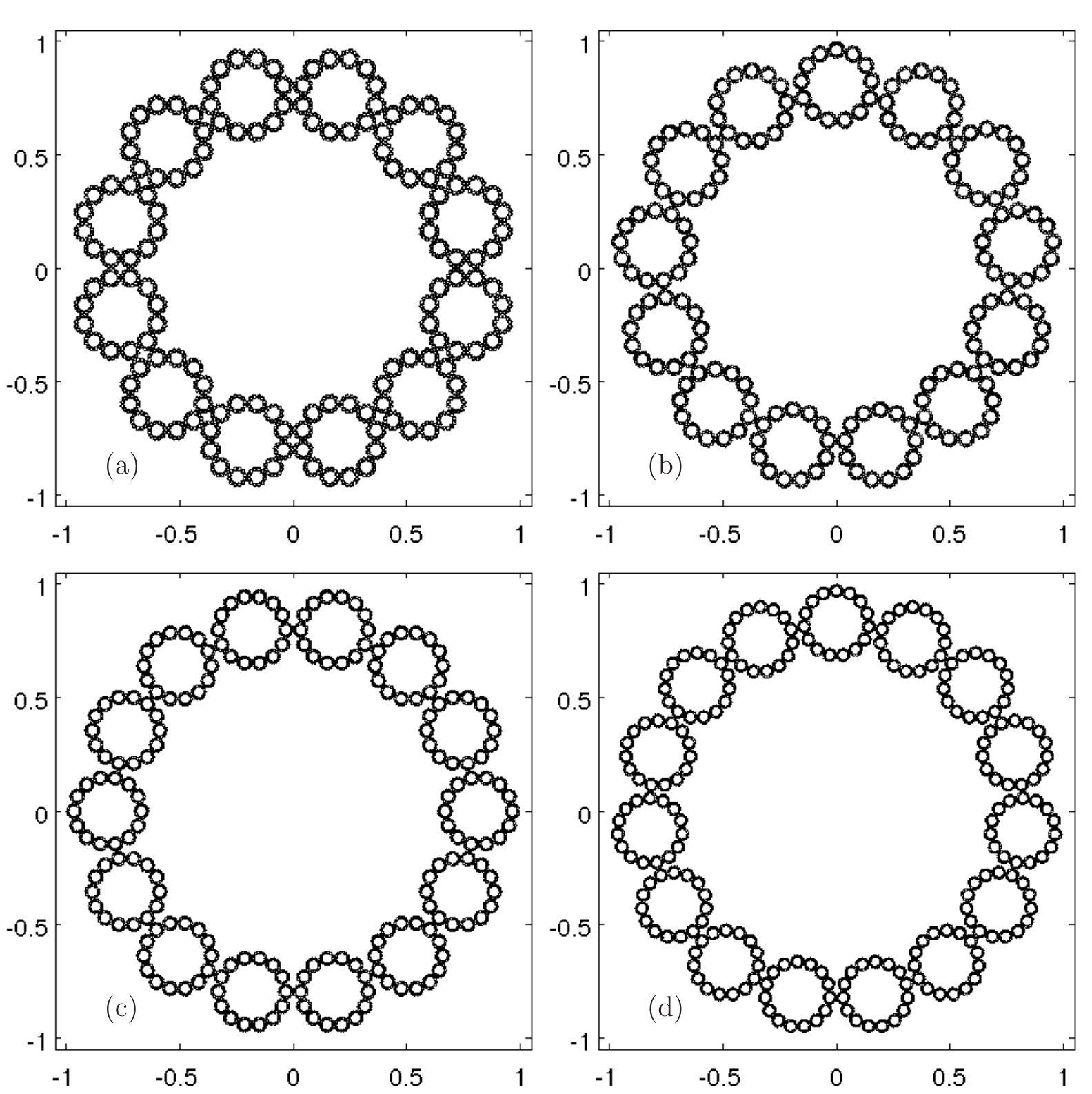}}
}
\end{picture}
\end{center}
\vspace{-0.5cm}
\caption{IFS generated fractal sets made of points that lie in $F_{\{12/4\}}$, $F_{\{13/4\}}$, $F_{\{14/4\}}$ and $F_{\{15/4\}}$ in subpanels (a), (b), (c) and (d), respectively.}\label{fr7}
\end{figure}
\begin{Ex}\label{ex3}
 We will compute the Hausdorff dimensions of the attractors shown in figure \ref{fr7}: 
\begin{itemize}[noitemsep, topsep=0pt]
\item Figure \ref{fr7}(a) $dim_HF_{\{12/4\}}=-ln(12)/ln(P(12,4))\approx1.598670034685813$ 
\item Figure \ref{fr7}(b) $dim_HF_{\{13/4\}}=-ln(13)/ln(P(13,4))\approx1.5653005271788485$ 
\item Figure \ref{fr7}(c) $dim_HF_{\{14/4\}}=-ln(14)/ln(P(14,4))\approx1.5490615012592472$ 
\item Figure \ref{fr7}(d) $dim_HF_{\{15/4\}}=-ln(15)/ln(P(15,4))\approx1.5430579163288531$ 
\end{itemize}
\end{Ex}

\newpage
\begin{figure}[t]
\begin{center}
\begin{picture}(140,80)(0,0)
\put(0,0){
\put(-10,0){\includegraphics{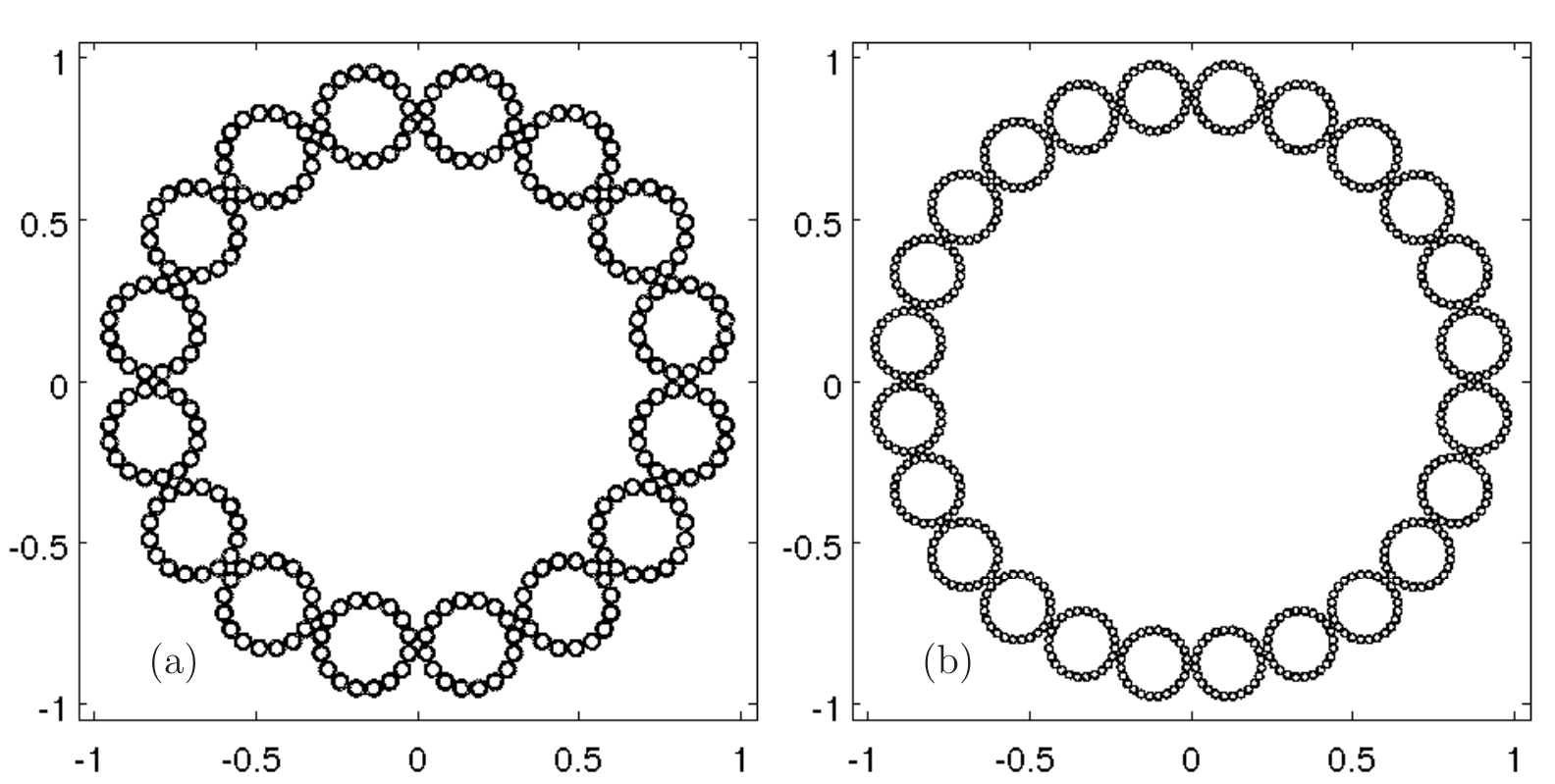}}
}
\end{picture}
\end{center}
\vspace{-0.5cm}
\caption{IFS generated fractal sets made of points that lie in $F_{\{16/5\}}$ and $F_{\{24/7\}}$ in subpanels (a) and (b), respectively.}\label{fr8}
\end{figure}
\begin{Ex}\label{ex4}
 We will compute the Hausdorff dimensions of the attractors shown in figure \ref{fr8}: 
\begin{itemize}[noitemsep, topsep=0pt]
\item Figure \ref{fr8}(a) $dim_HF_{\{16/5\}}=-ln(16)/ln(P(16,5))\approx1.5434949184823248$ 
\item Figure \ref{fr8}(b) $dim_HF_{\{24/7\}}=-ln(24)/ln(P(24,7))\approx1.4772930562556852$ 
\end{itemize}
\end{Ex}

Also, the $dim_HF_{\{n/m\}}$ for $n\in[17,50]$ and $m\in[n/4,n/4+1]$ are as follows:

$\{dim_HF_{\{17/5\}},dim_HF_{\{18/5\}}, ... , dim_HF_{\{50/13\}}\}\approx\{1.5238, 1.5126, 1.5071, 1.5056, 1.4924,\\ 
1.4841, 1.4794, 1.4773, 1.4677, 1.4613, 1.4573, 1.4551, 1.4478, 1.4426, 1.4391, 1.437, 1.4312, 1.4269,\\ 
1.4239, 1.422, 1.4172, 1.4136, 1.4109, 1.4091, 1.4051, 1.402, 1.3997, 1.398, 1.3946, 1.3919, 1.3898,\\ 
1.3883, 1.3853, 1.3829\}$
Finally, for $n=1e+308$, $dim_HF_{\{1e+308/2.5e+307\}}\approx1.001622$. 
\begin{Th}\label{D1}
As $n$ goes to infinity, $dim_HF_{\{n/m\}}$ approaches $1$
\end{Th}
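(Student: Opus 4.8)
The plan is to reduce the statement to the elementary asymptotics of the ratio $P(n,m)$. Since $F_{\{n/m\}}$ is, by construction, the attractor of $n$ similarity maps all having the \emph{same} contraction ratio $P(n,m)$, equation (\ref{eq4}) specialises to $n\,P(n,m)^{\dim_H F_{\{n/m\}}}=1$, so that
\[
\dim_H F_{\{n/m\}}=\frac{\ln n}{\ln\!\big(1/P(n,m)\big)} .
\]
Here $P(n,m)\in(0,1)$ by Theorem \ref{V1} (and when $4\mid n$ the two admissible values $m=n/4$ and $m=n/4+1$ yield the same $P$, as shown there), so the right-hand side is well defined. Thus everything comes down to the single claim
\[
\lim_{n\to\infty} n\,P(n,m)=\pi ,
\]
which is the heart of the argument.

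To prove this claim I would insert (\ref{eq1}) and write
\[
n\,P(n,m)=\frac{n\sin(\pi/n)}{2\cos\!\big((m-1)\pi/n\big)\sin\!\big(m\pi/n\big)} .
\]
The numerator tends to $\pi$ because $n\sin(\pi/n)\to\pi$. For the denominator I use the constraint $n/4\le m\le n/4+1$ from Theorem \ref{V1}: it gives $\tfrac14\le \tfrac{m}{n}\le \tfrac14+\tfrac1n$, hence $\tfrac{m}{n}\to\tfrac14$ and likewise $\tfrac{m-1}{n}\to\tfrac14$. Continuity of the trigonometric functions then yields $\sin(m\pi/n)\to\sin(\pi/4)=\tfrac{\sqrt2}{2}$ and $\cos((m-1)\pi/n)\to\cos(\pi/4)=\tfrac{\sqrt2}{2}$, so the denominator converges to $2\cdot\tfrac{\sqrt2}{2}\cdot\tfrac{\sqrt2}{2}=1$. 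Dividing, $n\,P(n,m)\to\pi$.

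Finally I would write $\ln\!\big(1/P(n,m)\big)=\ln n-\ln\!\big(n\,P(n,m)\big)=\ln n-\ln\pi+o(1)$ and substitute:
\[
\dim_H F_{\{n/m\}}=\frac{\ln n}{\ln n-\ln\pi+o(1)}\longrightarrow 1 \qquad (n\to\infty).
\]
Since $nP(n,m)\to\pi>1$, the denominator is eventually strictly smaller than $\ln n$, so in fact $\dim_H F_{\{n/m\}}>1$ for all large $n$, matching the numerical table above. There is no serious obstacle here: the computation is routine once the formula $\dim_H F_{\{n/m\}}=\ln n/\ln(1/P(n,m))$ is in place, and the only point deserving attention is the integrality of $m$ — one must not set $m=n/4$ but instead handle the branches $m=\lceil n/4\rceil$ via the two-sided bound $n/4\le m\le n/4+1$, which is exactly what makes the limit $m/n\to 1/4$ (and hence the value $\pi$) independent of which branch occurs.
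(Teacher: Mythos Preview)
Your proof is correct and follows essentially the same route as the paper: both derive $\dim_H F_{\{n/m\}}=\ln n/\ln(1/P)$ from $nP^s=1$, use the constraint $m\in[n/4,n/4+1]$ to obtain $P\sim\pi/n$, and conclude the limit is $1$. The only cosmetic difference is that the paper finishes with L'H\^opital's rule on $\ln n/\ln(n/\pi)$, whereas you compute the limit directly via $\ln(1/P)=\ln n-\ln\pi+o(1)$; your treatment of the integrality of $m$ is also more explicit than the paper's.
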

\begin{proof}
Let $s=dim_HF_{\{n/m\}}$ then from
$\displaystyle{\lim_{n \rightarrow \infty} P =\lim_{n \rightarrow \infty} \frac{sin(\pi/n)}{2cos((m-1)\pi/n)sin(m\pi/n)}=}$\\
$\displaystyle{=\lim_{n \rightarrow \infty} \frac{sin(\pi/n)}{2cos(\pi/4)sin(\pi/4)} = \lim_{n \rightarrow \infty} sin(\pi/n) = \lim_{n \rightarrow \infty} \pi/n}$ and $nP^s=1$ we can deduce $s$.\\ 
Thus, $\displaystyle{\lim_{n \rightarrow \infty} s = \lim_{n \rightarrow \infty} \frac{\ln(n)}{\ln(n/\pi)}=\infty/\infty}$, hence
$\displaystyle{\lim_{n \rightarrow \infty} s = \lim_{n \Rightarrow \infty} \frac{\frac{\partial\ln(n)}{\partial n}}{\frac{\partial \ln(n/\pi)}{\partial n}}=1}$
\end{proof}

As $F_{\{n/m\}}$ is inscribed in the same circle in which the initial \{n/m\}-polygon is inscribed, a corollary of Theorem \ref{D1} is that as $n \rightarrow \infty$ the $F_{\{n/m\}}$ is going to be arbitrary close to the circle in which the initial \{n/m\}-polygon is inscribed.

\subsection{Exact drawing of the IFS iterations}

All the figures above were drawn by using a random walk generator that draws points which lie in the IFS attractor \cite{Devaney_1990,Falconer_1990}.
Another way of showing the attractor is by plotting large enough iteration (3th or 4th is usually enough) of the IFS where multiple scaled-down copies of the initial polygon are imaged.
In figure \ref{fr9} an example of this plotting approach is shown where in panel (b) the fourth iteration of the \{5/2\}-polygon looks like figure \ref{fr5}(c) where the same attractor is produced by the random walk technique.
In the next section we will use the latter technique more often for the sake of the clarity of the concepts presented.

\begin{figure}[t]
\begin{center}
\begin{picture}(140,80)(0,0)
\put(0,0){
\put(-10,0){\includegraphics{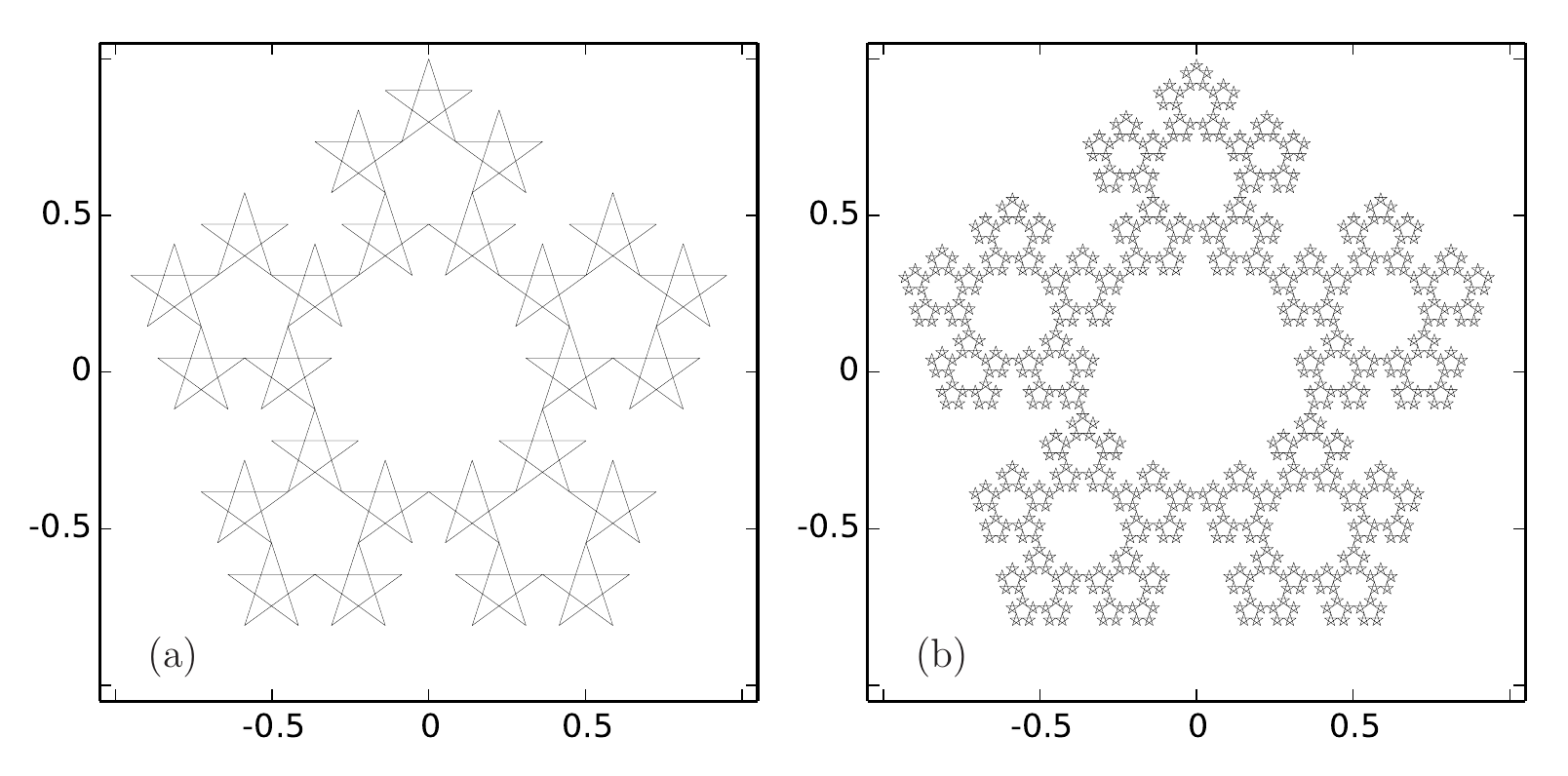}}
}
\end{picture}
\end{center}
\vspace{-0.5cm}
\caption{IFS with initial \{5/2\} star-polygon where the second and the fourth iterations are shown in panels (a) and (b), respectively.}\label{fr9}
\end{figure}

\section{The centre of the circle as an additional point of attraction}\label{sec4}
If we add the centre of the circle as another attracting point that the random generator takes into account, then we can produce non-self-intersecting fractal sets that cover a great amount of the area that is bounded by the unit circle. 
This result is due to the fact that the centre point adds to the IFS attractor (the invariant set) one more scaled copy of the initial star-polygon, hence we need an additional contracting map which we will call $S_c$.
Moreover, if the scaling factor of the central point is carefully computed, one can exploit a number of different features of the star-polygons.
Here we will give a few introductory examples.

 Let us consider an initial polygon \{3/1\}, then $P(3,1)=1/2$ and let us have a central map $S_c$ with the same ratio $1/2$ and rotation $\pi/3$ added to the set of maps $\{S_1,S_2,S_3\}$.
This will result in a triangular shape attractor with a Hausdorf dimension $dim_HF_{\{3/1\}}[L^1,\pi/3]=-ln(4)/ln(P(3,1))=ln(4)/ln(2)=2$.
Thus, the only difference from the attractor shown in figure \ref{fr5}(b) will be the triangular shape.

 We will present the IFS of the $\{5/2\}$ (see figure \ref{fr10}, Example \ref{ex5}) with an attracting centre, where the similarity map corresponding to the centre point has the same scaling factor $P(5,2)$ as the maps that correspond to the vertices of the initial $\{5/2\}$-polygon. 
In figure \ref{fr11} another IFS is shown and its dimension is computed in Example \ref{ex6}.
This fractal has centre-map that not only scales, but also rotates the initial polygon at angle $\pi/5$ while keeping the ratio $P(5,2)$.
Let us also see the IFS of the $\{6/2\}$ with an attracting centre, where the similarity map corresponding to the centre point has a scaling factor $P(6,2)$ and does not imply rotation, presented in figure \ref{fr12} and Example \ref{ex7}.

From the dimensions computed in section \ref{dimensions} we can deduce that as $n$ grows the scaling ratio $P(n,m)$, where $m\in[n/4,n/4+1]$ monotonically decreases and the resulting fractals shrink in dimension. 
Thus, if we want to increase $n$, but keep the attractors with a reasonably high dimension, we can no longer use the same ratio for the centre scaling map as in the cases for $n=5$ and $n=6$.
Therefore, we will define different rules for the scaling ratio of the centre map $S_c$ for any $n$, depending if it is odd or even and if $S_c$ includes any rotation such as $\pi/n$ or it does not.  

\begin{figure}[hbp]
\begin{center}
\begin{picture}(140,160)(0,0)
\put(0,0){
\put(-10,0){\includegraphics{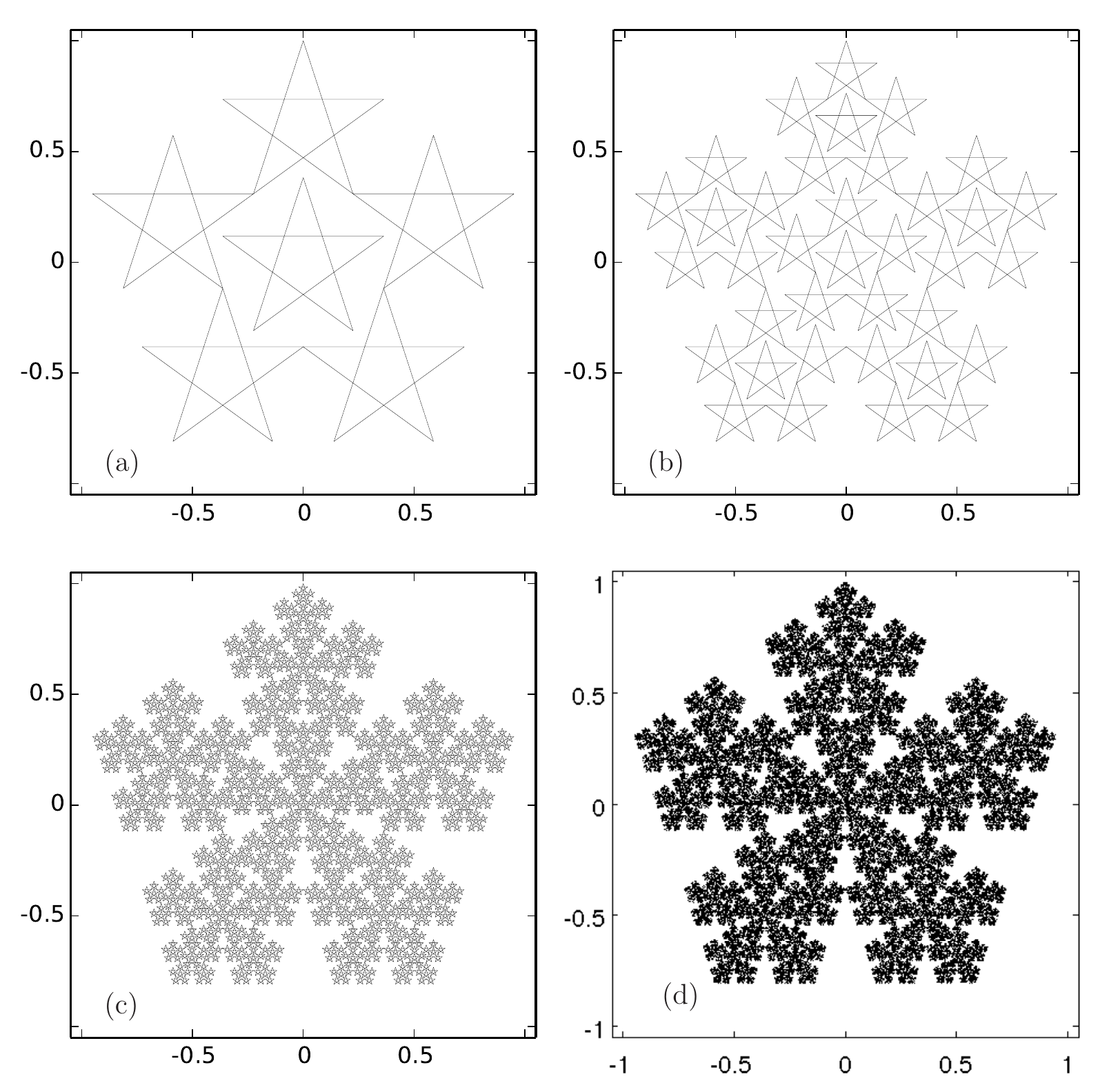}}
}
\end{picture}
\end{center}
\vspace{-0.5cm}
\caption{IFS with initial \{5/2\} star-polygon where the central map $S_c$ has no rotation and uses the ratio $P(5,2)$. 
The first, the second and the fourth iterations are shown in panels (a), (b) and (c) respectively. 
100000 points that lie on the attractor of the IFS are shown in panel (d).}\label{fr10}
\end{figure}
\begin{Ex}\label{ex5}
 We will compute the Hausdorff dimensions of the attractor shown in figure \ref{fr10}(d): 
$dim_HF_{\{5/2\}}[L^1,0]=-ln(6)/ln(P(5,2))\approx1.8617$ 
\end{Ex}
\begin{figure}[hpb]
\begin{center}
\begin{picture}(140,160)(0,0)
\put(0,0){
\put(-10,0){\includegraphics{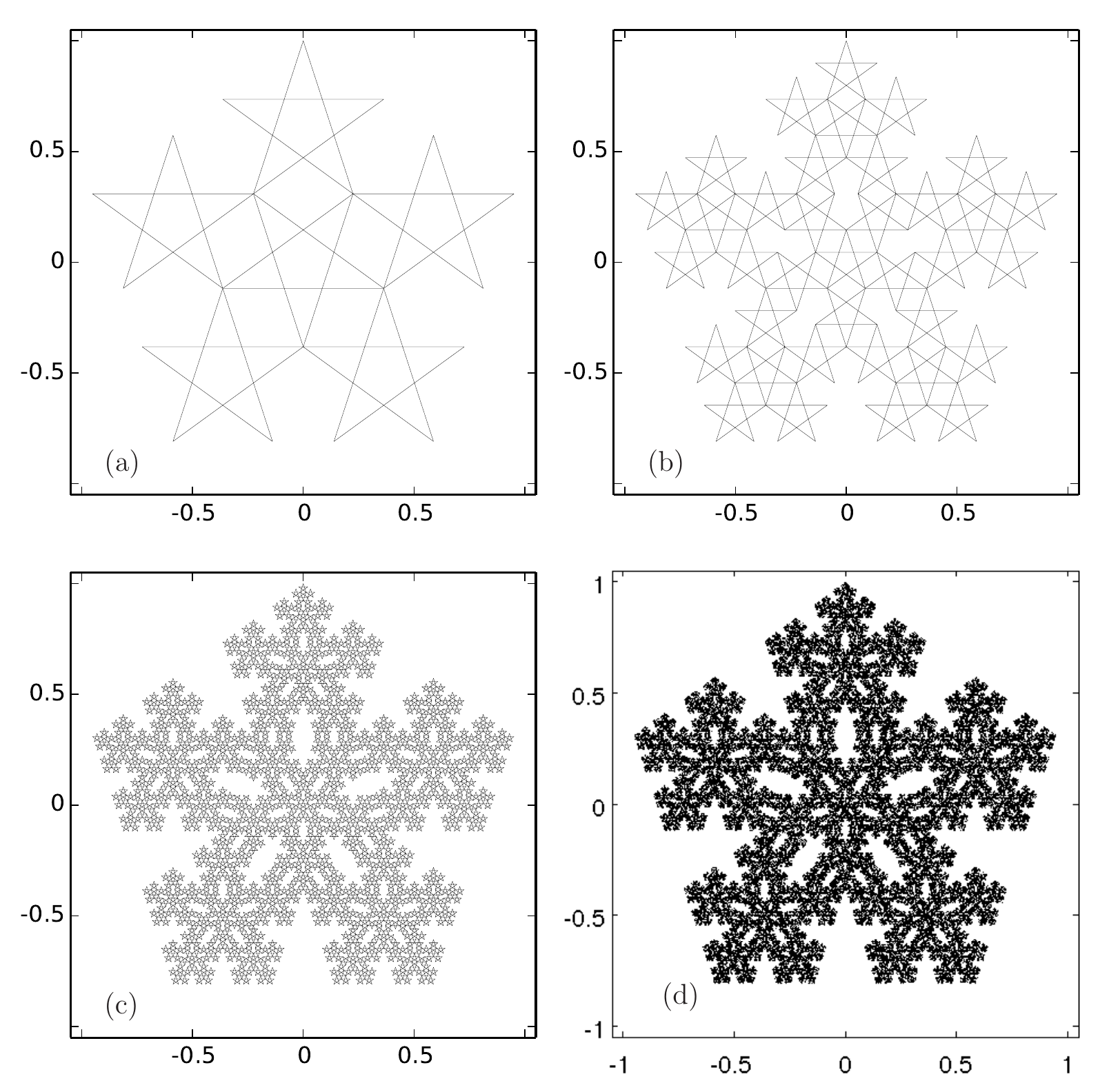}}
}
\end{picture}
\end{center}
\vspace{-0.5cm}
\caption{IFS with initial \{5/2\} star-polygon where the central map $S_c$ has rotation $\pi/5$ and uses the ratio $P(5,2)$. 
The first, the second and the fourth iterations are shown in panels (a), (b) and (c) respectively. 
100000 points that lie on the attractor of the IFS are shown in panel (d).}\label{fr11}
\end{figure}
\begin{Ex}\label{ex6}
 We will compute the Hausdorff dimensions of the attractor shown in figure \ref{fr11}(d): 
$dim_HF_{\{5/2\}}[L^1,\pi/5]=dim_HF_{\{5/2\}}[L^1,0]=-ln(6)/ln(P(5,2))\approx1.8617$  
\end{Ex}
\newpage
\begin{figure}[hbp]
\begin{center}
\begin{picture}(140,160)(0,0)
\put(0,0){
\put(-10,0){\includegraphics{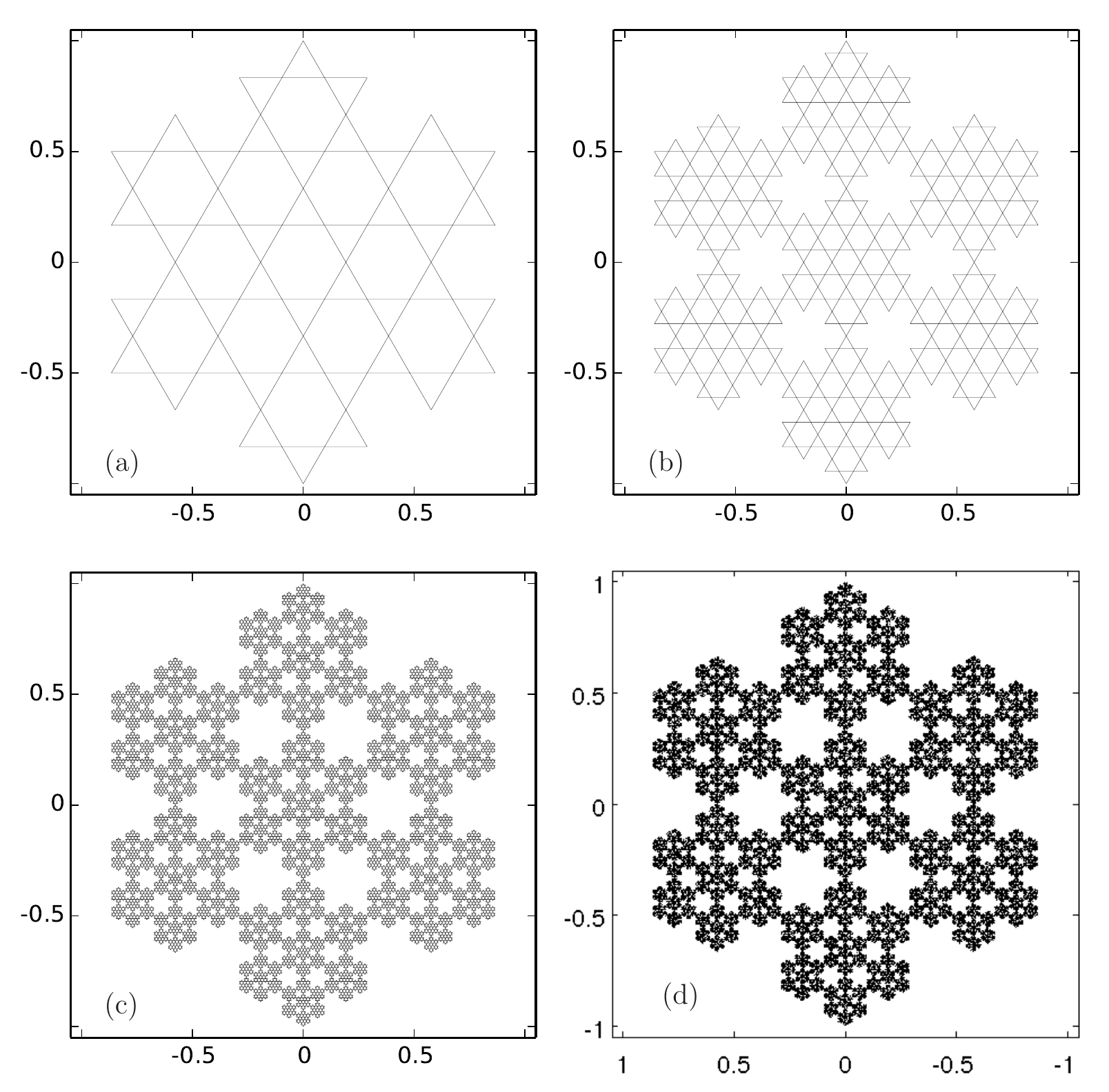}}
}
\end{picture}
\end{center}
\vspace{-0.5cm}
\caption{IFS with initial \{6/2\} star-polygon where the central map $S_c$ has no rotation and uses the ratio $P(6,2)$. 
The first, the second and the fourth iterations are shown in panels (a), (b) and (c) respectively. 
100000 points that lie on the attractor of the IFS are shown in panel (d).}\label{fr12}
\end{figure}
\begin{Ex}\label{ex7}
 We will compute the Hausdorff dimensions of the attractor shown in figure \ref{fr12}(d): 
$dim_HF_{\{6/2\}}[L^0,0]=-ln(7)/ln(P(6,2))\approx1.7712$ 
\end{Ex}
\newpage
\begin{figure}[t]
\begin{center}
\begin{picture}(140,70)(0,0)
\put(0,0){
\put(0,0){\includegraphics{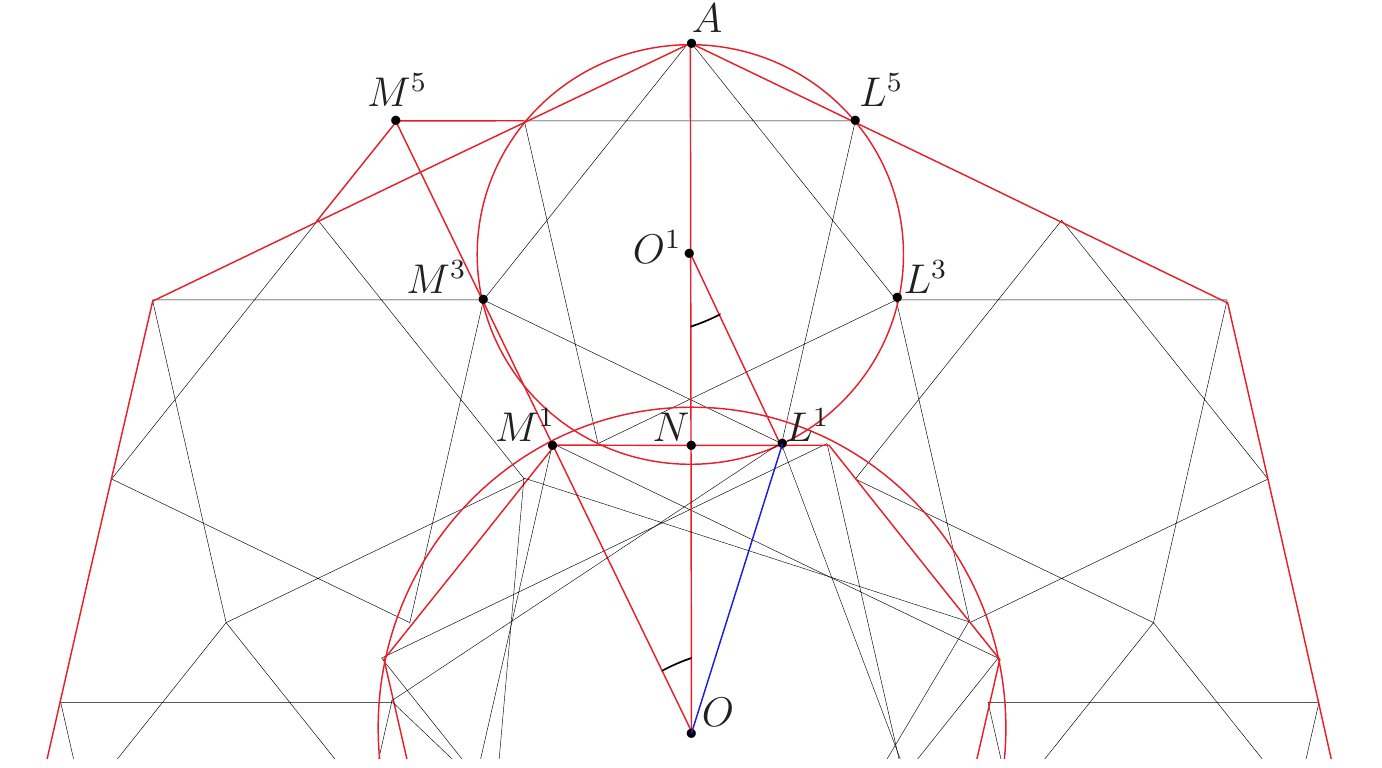}}
}
\end{picture}
\end{center}
\vspace{0.0cm}
\caption{Sketch of a $\{7/2\}$ polygon IFS with a centre map after the first iteration. Shown is the way of computing of the ratio for the centre map.}\label{fr13}
\end{figure}
In order to clearly show how the ratios of $S_c$ are deduced, in figure \ref{fr13} is sketched part of the IFS of $\{7/2\}$-polygon when it is iterated only once. 
There are two centre polygons, one is rotated at angle $\pi/7$ (the one with a vertex at $M^1$) and another rotated at an angle to be computed later (with vertex $L^1$).
Thus, the contraction map that scales the original $\{7/2\}$-polygon towards the point $O$ and images it in one of these two polygons includes rotation as well.
Using the sketch in figure \ref{fr13} we will show that some of the points $M^i$ and $L^i$ will be used as vertices of a central polygon that corresponds to a central map $S_c$, which can be used for non-self-intersecting polygonal IFS.

Let us construct the points $M^1, M^3, M^5, ..., M^{2i+1}$ lying on the line that crosses the line-segment $O^1O$ at angle $\pi/7$.
This line is intersected by the line segments that start from the vertices $L^1, L^3$ and $L^5$ and are perpendicular to the line-segment $O^1O$ resulting in the points $M^1, M^3$ and $M^5$.
Now we will define six different ratios $P_c$ for the centre map $S_c$: $OM^1/OA$, $OM^3/OA$, $OM^5/OA$ and $OL^1/OA$, $OL^3/OA$, $OL^5/OA$.
All of them are defined by the angles $\measuredangle{OO^1L^l}=l\pi/n$ where in general $l=2i+1$ for $i\geq0, i\in\mathbb{Z}$ when the polygons are odd-sided and $l=2i$ when the polygons are even-sided.
For $l=1,3,5$, $ON=OA-O^1A-O^1L^l\cos(l\pi/7)$ and $OM^l=ON/\cos(\pi/7)$, thus $OM^l/OA=1/\cos(\pi/7)-(O^1A/OA)(1/\cos(\pi/7)-\cos(l\pi/7)/\cos(\pi/7))$. 
This equation also holds for $l=7$, where $A\equiv L^7$ and if apply equation (\ref{eq1}), so that $O^1A/OA=P(7,2)$, then $OM^l/OA=1/\cos(\pi/7)-P(7,2)/\cos(\pi/7)-P(7,2)\cos(l\pi/7)/\cos(\pi/7)$.
Also, if generalised for an arbitrary initial $\{n/m\}$-polygon, it leads to the following equation: 

\begin{align}\label{eq5}
&\displaystyle{\frac{OM^l}{OA}}=\displaystyle{\frac{1-P(n,m)-P(n,m)\cos(l\pi/n)}{\cos(\pi/n)}} \\
&\measuredangle{O^1OM^l}=\pi/n \nonumber \\
&0\leq l\leq n, l=2i \text{ if $n$-even}, l=2i+1 \text{ if $n$-odd}, i\geq0, i\in\mathbb{Z} \nonumber
\end{align}

Another ratio that may be used for the map $S_c$ is $OL^l/OA$.
Here $NL^1=O^1L^1\sin(\pi/7)$, hence, $\tan(\measuredangle{O^1OL^1})=\displaystyle{\frac{O^1L^1\sin(\pi/7)}{OA-O^1A-O^1L^1\cos(\pi/7)}}$ which for an arbitrary $l$ will become\\ $\tan(\measuredangle{O^1OL^l})=\displaystyle{\frac{O^1L^l\sin(l\pi/7)}{OA-O^1A-O^1L^l\cos(l\pi/7)}}$. 
Now let us take into account that $O^1L^l=O^1A$ and $O^1A/OA=P(7,2)$, hence $\tan(\measuredangle{O^1OL^l})=\displaystyle{\frac{P(7,2)\sin(l\pi/7)}{1-P(7,2)-P(7,2)\cos(l\pi/7)}}$. 
Finally, for an arbitrary initial $\{n/m\}$-polygon we can deduce the angle $\measuredangle{O^1OL^l}$ and from $(OL^l)^2=(NL^l)^2+(ON)^2$ we can also deduce the ratio $OL^l/OA$ as follows:
\begin{align}\label{eq6}
&\displaystyle{\frac{OL^l}{OA}}=\sqrt{2P(n,m)(P(n,m)-1)(1+\cos(l\pi/n))+1} \\
&\gamma(n,m,l)=\measuredangle{O^1OL^l}=\arctan\left(\displaystyle{\frac{P(n,m)\sin(l\pi/n)}{1-P(n,m)-P(n,m)\cos(l\pi/n)}}\right) \nonumber \\
&0\leq l\leq n, l=2i \text{ if $n$-even}, l=2i+1 \text{ if $n$-odd}, i\geq0, i\in\mathbb{Z} \nonumber
\end{align}
Now we can look back at figures \ref{fr10}, \ref{fr11} and \ref{fr12} and understand how they are constructed.
In example \ref{ex5}, figure \ref{fr10}, the contraction ratio of $S_c$ is $OL^1/OA$, and the attractor of the IFS is denoted as $F_{\{5/2\}}[L^1,0]$, where 0 indicates the angle of rotation that $S_c$ has.
In this case of $\{5/2\}$-polygon $OM^1/OA=OL^1/OA$, so it does not matter if $L^1$ or $M^1$ is used for the notation.
In the other examples where $OL^l/OA=OM^l/OA$, again $L^l$ will be used as notation.
In example \ref{ex6}, figure \ref{fr11} the contraction ratio of $S_c$ is again $OL^1/OA$, but here we have rotation at angle $\pi/5$, thus the attractor of the IFS is denoted as $F_{\{5/2\}}[L^1,\pi/5]$.
And finally in example \ref{ex7}, figure \ref{fr12} the contraction ratio of $S_c$ is $OL^0/OA$ and the attractor of the IFS is denoted $F_{\{6/2\}}[L^0,0]$. 

\subsection{Even $n$}

In this subsection we will take a close look at the IFS that originates from even sided star-polygons.
Firstly, we should note that the same way as $F_{\{6/2\}}[L^0,0]$, for any even $n$, $F_{\{2i/m\}}[L^0,0]$ will always be a non-self-intersecting attractor if $m\in[n/4,n/4+1]$ and $i\in\mathbb{N}$. 
Now, the first example has \{6/2\} as initial polygon, and $S_c$ has scaling ratio $OL^2/OA$ and rotation $\pi/6$.
The resulting fractal can be seen in figure \ref{fr14}, where from the random generated attractor, see panel (d), we can expect the exact dimension of 2.
Indeed, this is analytically proven in the computations of example \ref{ex8}.
Similarly, the constructions and the attractors of $F_{\{8/2\}}[L^2,\frac{\pi}{8}]$ and $F_{\{8/3\}}[L^2,\frac{\pi}{8}]$ are shown in figures \ref{fr15} and \ref{fr16}.
They have equal dimension computed in example \ref{ex9}.
Another pair of attractors that have central map and originate from \{8/2\}-star polygon are the $F_{\{8/2\}}[L^0,0]$ and $F_{\{8/3\}}[L^0,0]$ shown in figures \ref{fr17} and \ref{fr18}.
They have equal dimension computed in example \ref{ex10}.
The last four examples of attractors clearly show that the scaling ratio and the number of the vertices are the parameters that define the attractor of the IFS.  
 
\newpage
\begin{figure}[hbp]
\begin{center}
\begin{picture}(140,160)(0,0)
\put(0,0){ 
\put(-10,0){\includegraphics{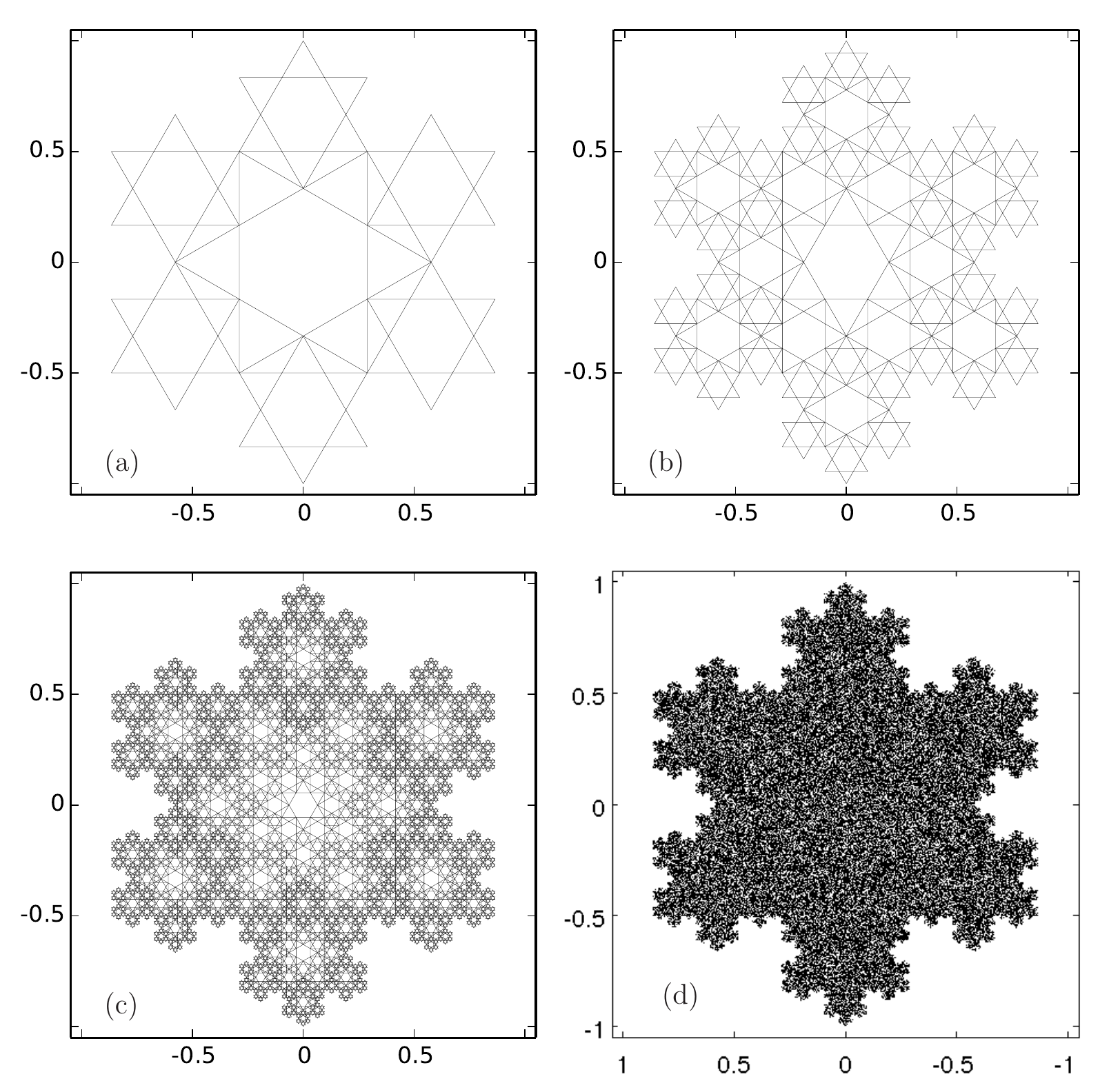}}
}
\end{picture}
\end{center}
\vspace{-0.5cm}
\caption{IFS with initial \{6/2\} star-polygon where $S_c$ has scaling ratio $OL^2/OA$ and rotation $\pi/6$. 
The first, the second and the fourth iterations are shown in panels (a), (b) and (c) respectively. 
100000 points that lie on the attractor of the IFS are shown in panel (d).}\label{fr14}
\end{figure}
\begin{Ex}\label{ex8}
 We will compute the Hausdorff dimensions of the attractor shown in figure \ref{fr14}(d): 
$6P(6,2)^{dim_HF_{\{6/2\}}[L^2,\frac{\pi}{6}]}+\sqrt{2P(6,2)(P(6,2)-1)(1+\cos(2\pi/6))+1}^{dim_HF_{\{6/2\}}[L^2,\frac{\pi}{6}]}=1$ \\
$6(1/3)^{dim_HF_{\{6/2\}}[L^2,\frac{\pi}{6}]}+\sqrt{(1/3)}^{dim_HF_{\{6/2\}}[L^2,\frac{\pi}{6}]}=1$. Let $y=\sqrt{(1/3)}^{dim_HF_{\{6/2\}}[L^2,\frac{\pi}{6}]}$\\
Hence, $6y^2+y-1=0$ and $y_{1,2}=1/3;-1/2$, therefore as $y\geq0$\\
$\sqrt{(1/3)}^{dim_HF_{\{6/2\}}[L^2,\frac{\pi}{6}]}=1/3\rightarrow dim_HF_{\{6/2\}}[L^2,\frac{\pi}{6}]=2$.
\end{Ex}

\newpage
\begin{figure}[hbp]
\begin{center}
\begin{picture}(140,160)(0,0)
\put(0,0){ 
\put(-10,0){\includegraphics{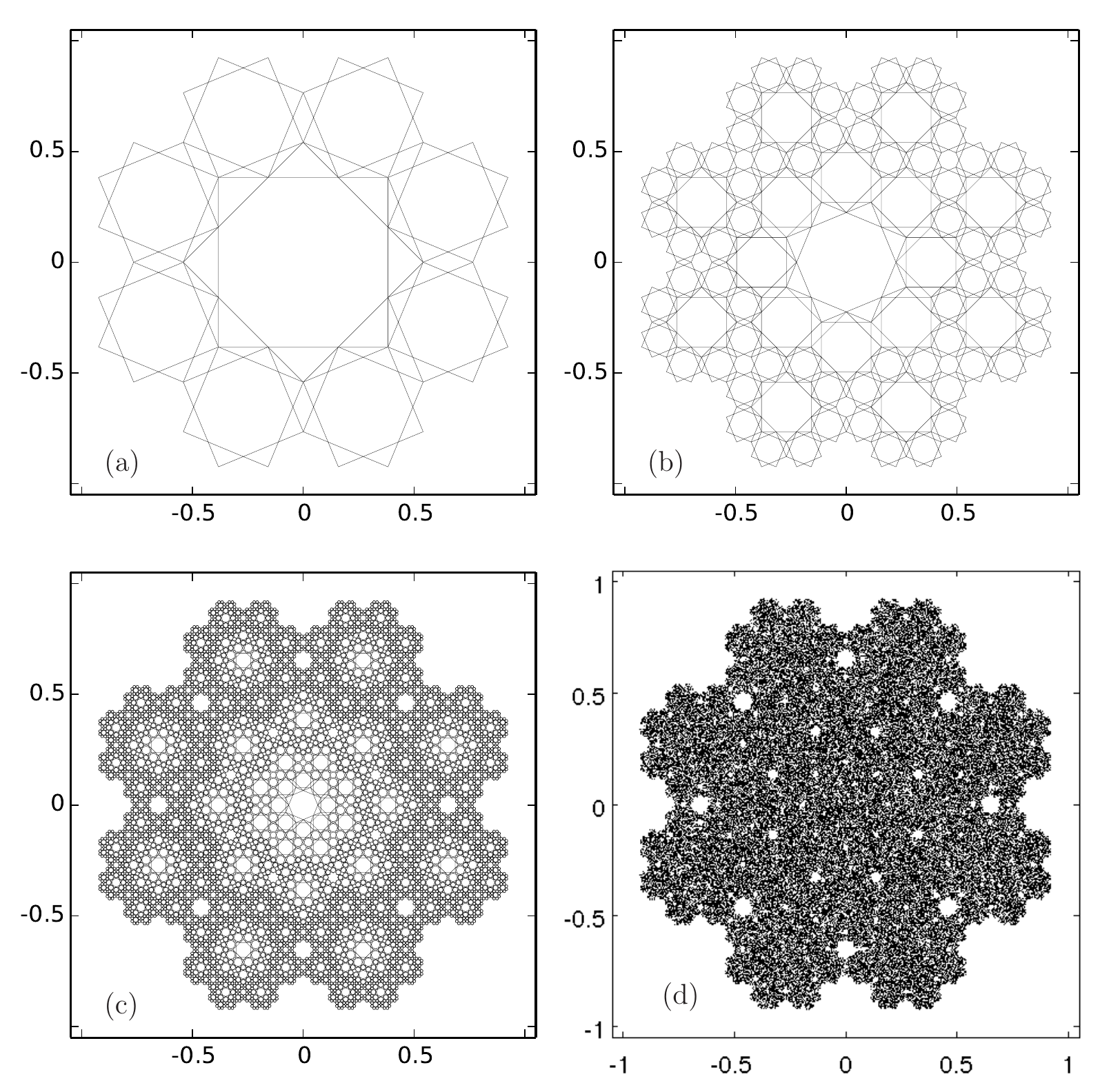}}
}
\end{picture}
\end{center}
\vspace{-0.5cm}
\caption{IFS with initial \{8/2\} star-polygon where $S_c$ has scaling ratio $OL^2/OA$ and rotation $\pi/8$. 
The first, the second and the fourth iterations are shown in panels (a), (b) and (c) respectively. 
100000 points that lie on the attractor of the IFS are shown in panel (d).}\label{fr15}
\end{figure}
\begin{Ex}\label{ex9}
 The Hausdorff dimensions of the attractor shown in figure \ref{fr15}(d) is\\ 
$dim_HF_{\{8/2\}}[L^2,\frac{\pi}{8}]\approx1.9799$
\end{Ex}

\newpage
\begin{figure}[hbp]
\begin{center}
\begin{picture}(140,160)(0,0)
\put(0,0){ 
\put(-10,0){\includegraphics{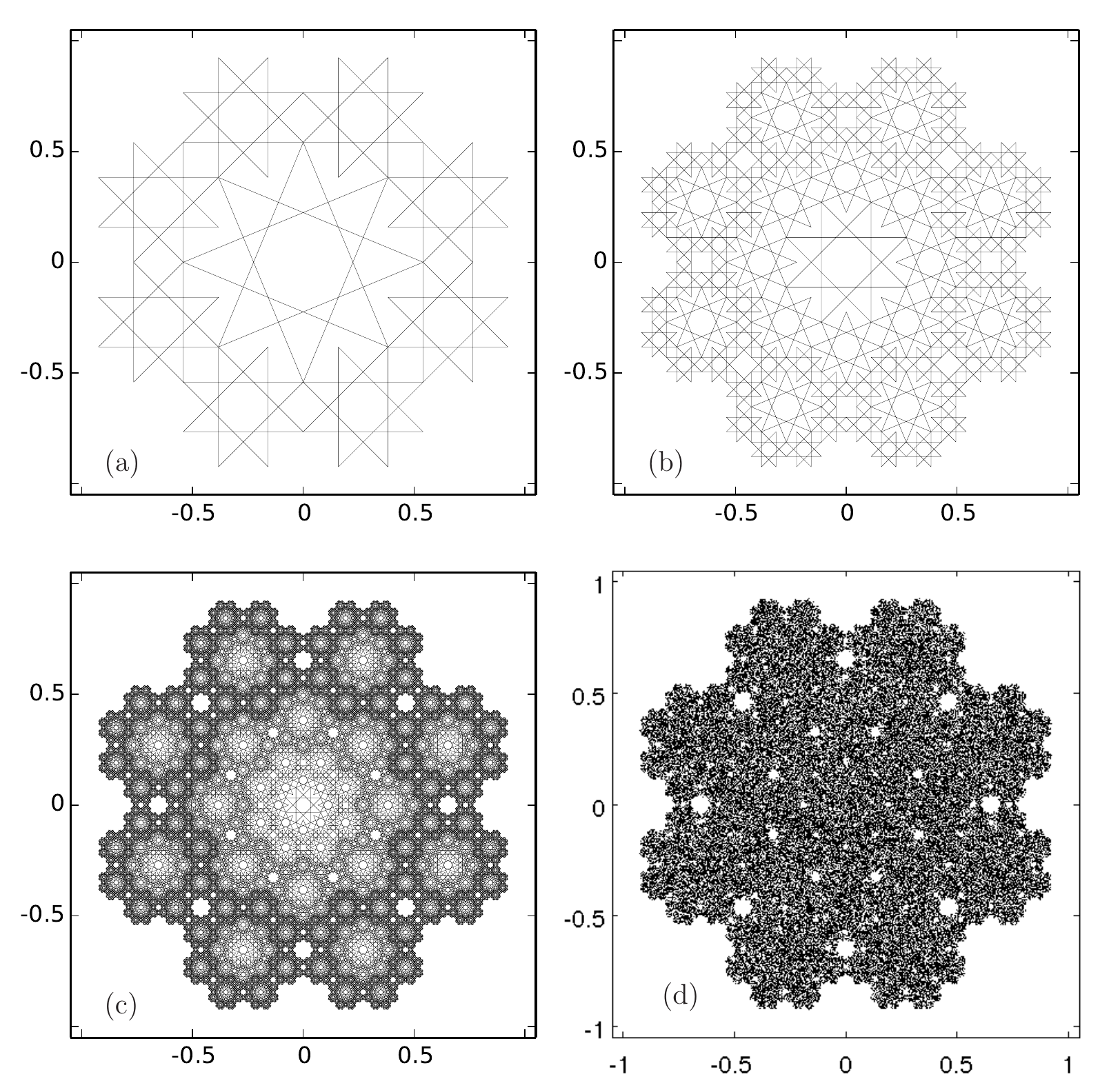}}
}
\end{picture}
\end{center}
\vspace{-0.5cm}
\caption{IFS with initial \{8/3\} star-polygon where $S_c$ has scaling ratio $OL^2/OA$ and rotation $\pi/8$. 
The first, the second and the fourth iterations are shown in panels (a), (b) and (c) respectively. 
100000 points that lie on the attractor of the IFS are shown in panel (d).}\label{fr16}
\end{figure}
\newpage
\begin{figure}[hbp]
\begin{center}
\begin{picture}(140,160)(0,0)
\put(0,0){ 
\put(-10,0){\includegraphics{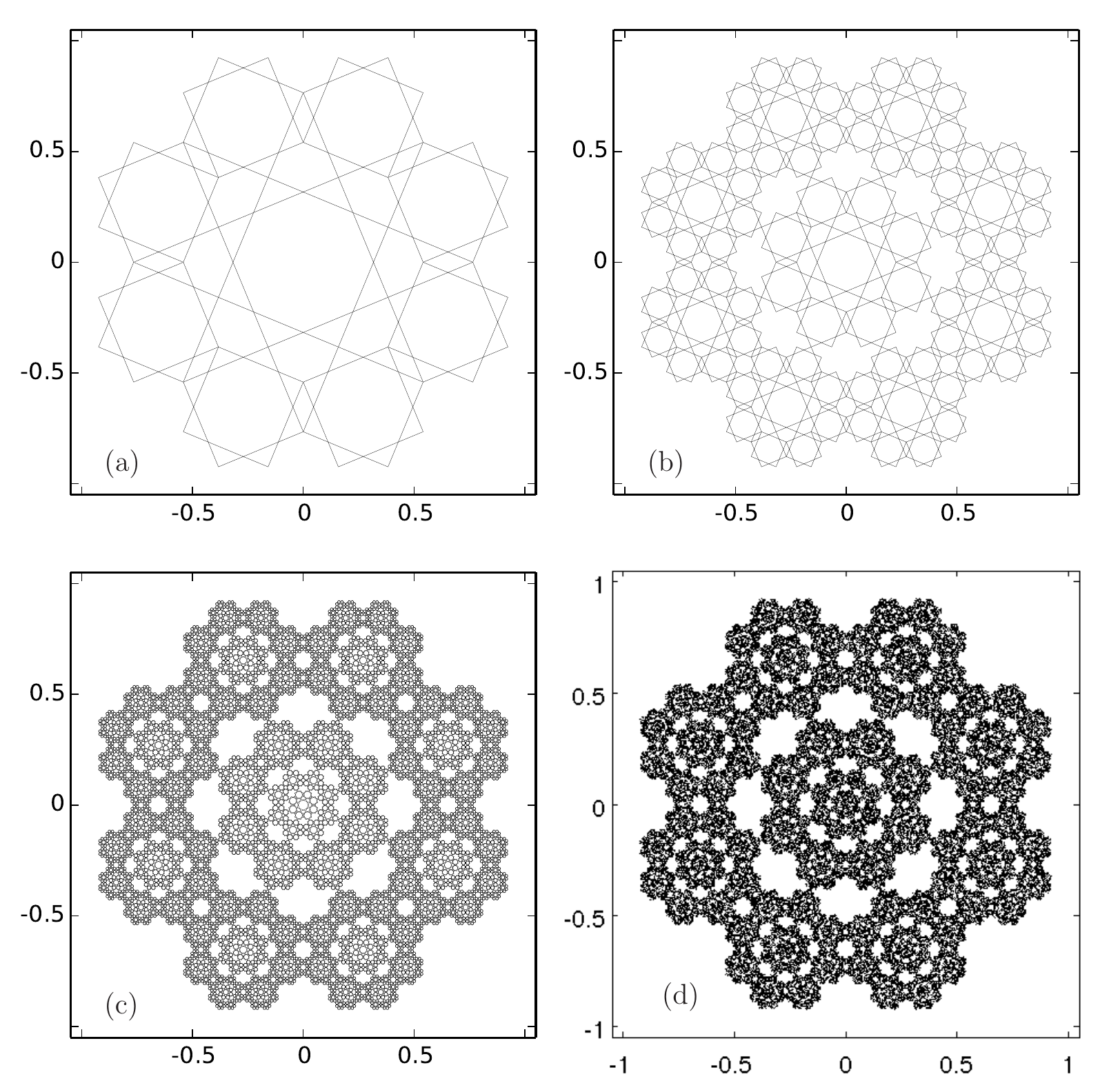}}
}
\end{picture}
\end{center}
\vspace{-0.5cm}
\caption{IFS with initial \{8/2\} star-polygon where $S_c$ has scaling ratio $OL^0/OA$ and no rotation. 
The first, the second and the fourth iterations are shown in panels (a), (b) and (c) respectively. 
100000 points that lie on the attractor of the IFS are shown in panel (d).}\label{fr17}
\end{figure}
\begin{Ex}\label{ex10}
 The Hausdorff dimensions of the attractor shown in figure \ref{fr17}(d) is\\ 
$dim_HF_{\{8/2\}}[L^0,0]\approx1.8678$
\end{Ex}

\newpage
\begin{figure}[hbp]
\begin{center}
\begin{picture}(140,160)(0,0)
\put(0,0){ 
\put(-10,0){\includegraphics{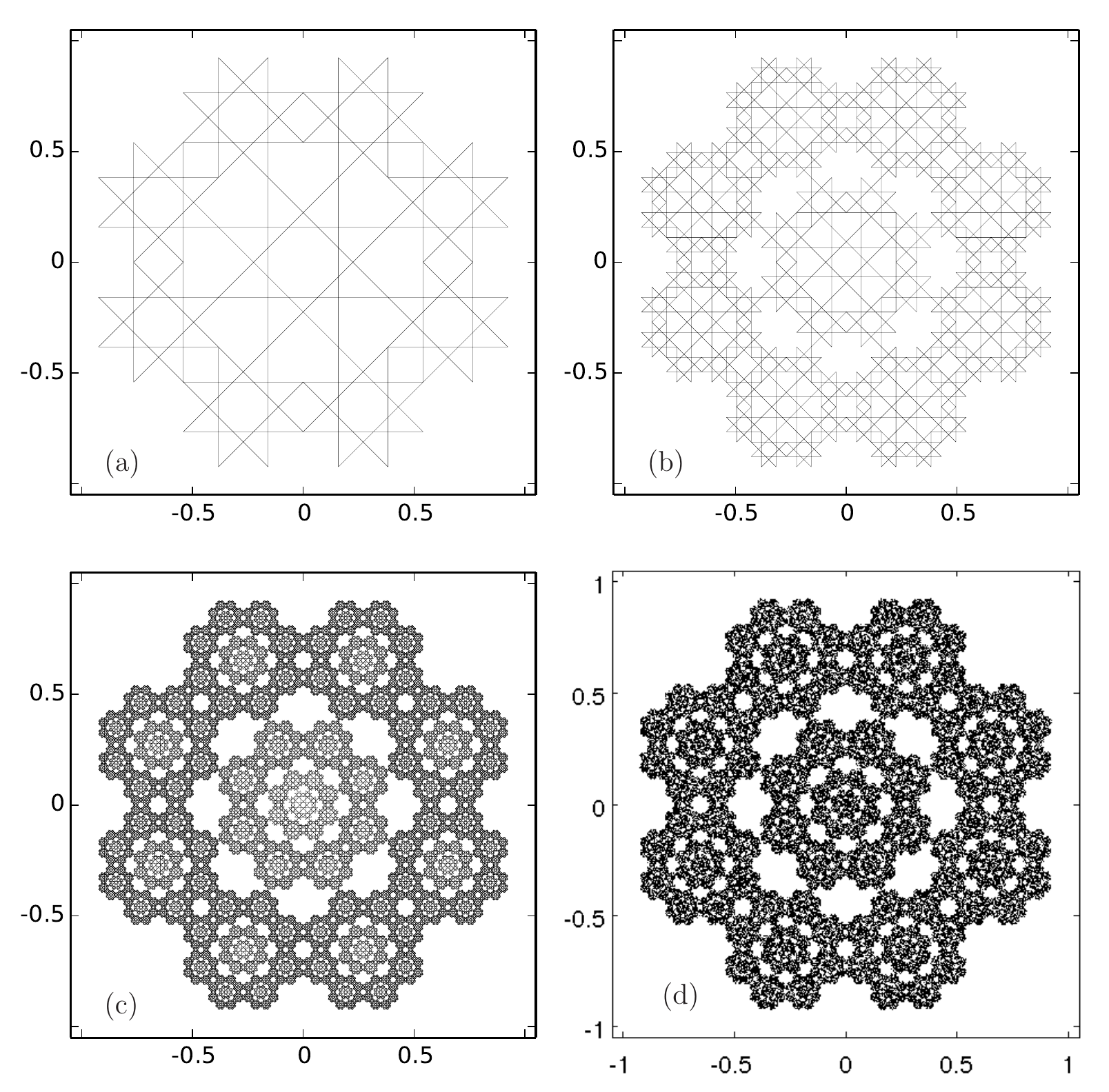}}
}
\end{picture}
\end{center}
\vspace{-0.5cm}
\caption{IFS with initial \{8/3\} star-polygon where $S_c$ has scaling ratio $OL^0/OA$ and no rotation. 
The first, the second and the fourth iterations are shown in panels (a), (b) and (c) respectively. 
100000 points that lie on the attractor of the IFS are shown in panel (d).}\label{fr18}
\end{figure}

\subsection{Odd $n$}
In this subsection we will take a close look at the IFS that originates from odd-sided star-polygons.
The first example has \{7/2\} as an initial polygon and $S_c$ has a scaling ratio $OM^1/OA$ and rotation $\pi/7$; see Eqs. (\ref{eq5}).
The resulting fractal can be seen in figure \ref{fr19}, where the first, the second and the fourth iterations are in panels (a), (b) and (c), while the randomly generated attractor is in panel (d).
The Hausdorf dimension of the attractor $F_{\{7/2\}}[M^1,\pi/7]$ is computed in example \ref{ex11} to be 1.8773.
Another fractal that originates from a \{7/2\}-polygon is shown in figure \ref{fr20}.
Here the scaling ratio is $OL^1/OA$ and the angle of rotation is computed using equations (\ref{eq6}), which leads to polygons that meet at their vertices.
The dimension of $F_{\{7/2\}}[L^1,\gamma(7,2,1)]$ is computed in example \ref{ex12} to be 1.8564.

The technique that uses the ratio $OL^1/OA$ and the angle from equations (\ref{eq6}) can also be used for producing non-intersecting self-similar fractals for any odd $n$. 
Therefore, for any $n$ we can generate a $n$-flake which will be either $F_{\{n/m\}}[L^0,0]$ if $n$ is even or $F_{\{n/m\}}[L^1,\gamma(n,m,1)]$ if $n$ is odd.

\subsection{The dimension of $\infty$-flake}

For any $n$, $m\in[n/4,n/4+1]$ and $i\in\mathbb{N}$, by construction if $n=2i$ then $F_{\{n/m\}}[L^0,0]$ is non-self-intersecting, and by construction if $n=2i+1$ then $F_{\{n/m\}}[L^1,\gamma(n,m,1)]$ is non-self-intersecting.
Therefore, equations (\ref{eq1}), (\ref{eq4}) and (\ref{eq6}) can be used for the corresponding $dim_HF_{\{n/m\}}[L^0,0]$ and $dim_HF_{\{n/m\}}[L^1,\gamma(n,m,1)]$ to be obtained.
\begin{Th}\label{D2} 
As $n$ goes to infinity, $dim_HF_{\{n/m\}[L^0,0]}$ and $dim_HF_{\{n/m\}[L^1,\gamma(n,m,1)]}$ approach $2$.
\end{Th}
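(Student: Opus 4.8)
The plan is to read the defining equation for the dimension off Theorem~\ref{th1} and equation~(\ref{eq4}), and then trap the exponent strictly between $2-\delta$ and $2$ for every fixed $\delta>0$ once $n$ is large. By construction the $n$-flake is the attractor of $n+1$ similarities: the $n$ vertex maps with common ratio $P=P(n,m)$ from~(\ref{eq1}), together with the central map $S_c$ whose scaling ratio is $c_n:=OL^l/OA$ given by~(\ref{eq6}), with $l=0$ when $n$ is even and $l=1$ when $n$ is odd (the rotation attached to $S_c$ is an isometry, hence irrelevant to Hausdorff dimension). Since the flake is non-self-intersecting (as recorded just before the theorem), equation~(\ref{eq4}) identifies $s_n:=dim_HF_{\{n/m\}}[L^0,0]$ (respectively $dim_HF_{\{n/m\}}[L^1,\gamma(n,m,1)]$) as the unique root of
\[
f_n(s):=nP^{\,s}+c_n^{\,s}=1 .
\]
Because $0<P<1$ and (for large $n$) $0<c_n<1$, the function $f_n$ is continuous and strictly decreasing on $[0,\infty)$ with $f_n(0)=n+1>1$ and $f_n(s)\to0$; so $s_n$ is well defined and it suffices to prove $f_n(2)<1<f_n(2-\delta)$ for all large $n$.

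First I would record the two asymptotics that drive everything. Exactly as in the proof of Theorem~\ref{D1}, since $m\in[n/4,n/4+1]$ both $(m-1)\pi/n$ and $m\pi/n$ tend to $\pi/4$, so $2\cos((m-1)\pi/n)\sin(m\pi/n)\to1$ and therefore, by~(\ref{eq1}),
\[
P(n,m)\sim\sin(\pi/n)\sim\frac{\pi}{n},\qquad\text{so}\qquad nP\to\pi,\quad n^2P^2\to\pi^2 .
\]
Squaring~(\ref{eq6}) gives $c_n^2=1-2P(1-P)\bigl(1+\cos(l\pi/n)\bigr)$; since $l\in\{0,1\}$ we have $\cos(l\pi/n)\to1$, whence $c_n^2=1-(4\pi+o(1))/n\to1$ and in particular $0<c_n<1$ for large $n$ (for even $n$ this is simply $c_n=1-2P$).

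For the upper bound, substitute $s=2$:
\[
f_n(2)=nP^2+c_n^2=1+P\bigl(nP-2(1-P)(1+\cos(l\pi/n))\bigr),
\]
and the bracket tends to $\pi-4<0$, so $f_n(2)<1$ for all large $n$; by monotonicity $s_n<2$. For the lower bound, fix $\delta\in(0,1)$. Using $c_n^{\,2-\delta}>c_n^{\,2}$ (valid because $0<c_n<1$),
\[
f_n(2-\delta)-1> nP^{\,2-\delta}-2P(1-P)(1+\cos(l\pi/n))=P\Bigl(nP^{\,1-\delta}-2(1-P)(1+\cos(l\pi/n))\Bigr).
\]
Here $nP^{\,1-\delta}=(nP)\,P^{-\delta}\sim\pi(\pi/n)^{-\delta}=\pi^{1-\delta}n^{\delta}\to\infty$, while the subtracted term tends to $4$; hence the right-hand side is positive for all large $n$, i.e. $f_n(2-\delta)>1$ and so $s_n>2-\delta$. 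Combining, $2-\delta<s_n<2$ for all large $n$, and letting $\delta\downarrow0$ gives $s_n\to2$. The argument is identical in the even case ($l=0$) and the odd case ($l=1$), so both limits equal $2$.

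The step I expect to be the genuine obstacle is the lower bound: at $s=2$ the defining sum already undershoots $1$ by an amount of order $1/n$ (coming from $c_n^2=1-\Theta(1/n)$ together with $nP^2=\Theta(1/n)$), so the dimension really does sit below $2$ for every finite $n$, and one must show that this deficit vanishes. This forces one to quantify how fast the vertex contribution $nP^{\,s}$ grows as $s$ drops below $2$, and the clean estimate $P\sim\pi/n$ (equivalently $n^2P^2\to\pi^2$) is precisely what makes $nP^{\,1-\delta}\to\infty$ for every $\delta>0$ and thereby closes the gap; a cruder bound would only yield $\liminf_n s_n\ge c$ for some $c$ strictly less than $2$.
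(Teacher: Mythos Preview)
Your argument is correct. You and the paper start from the same defining equation $nP^{s}+c_n^{s}=1$ and the same asymptotics $P\sim\pi/n$, $c_n\to 1$, but then diverge: the paper isolates $P^{s}$, takes logarithms, and computes $\lim_{n\to\infty}\dfrac{\ln\bigl(\frac{1}{n}(1-(1-2\pi/n)^{s})\bigr)}{\ln(\pi/n)}$ by L'H\^opital and a first-order Taylor expansion of $(1-2\nu)^{s}$, arriving at $s=1+1=2$. You instead exploit the strict monotonicity of $f_n(s)=nP^{s}+c_n^{s}$ and trap the root by the explicit test values $s=2$ and $s=2-\delta$, reducing everything to the two clean observations $nP\to\pi<4$ and $nP^{1-\delta}\to\infty$.

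The trade-offs: the paper's calculation is compact but, as written, carries $s$ on both sides of the limit and so tacitly presupposes that $\lim s_n$ exists before evaluating it; your squeeze argument bypasses that issue entirely and needs neither L'H\^opital nor a Taylor expansion, at the cost of being slightly longer. Your side computation $f_n(2)=1+P\bigl(nP-2(1-P)(1+\cos(l\pi/n))\bigr)$ also makes transparent \emph{why} $s_n<2$ for every finite $n$ (the bracket tends to $\pi-4<0$), which the paper's limit manipulation does not display.
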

\begin{proof}
Both dimensions can be deduced from the equation\\
\begin{equation*} 
nP(n,m)^s+\sqrt{2P(n,m)(P(n,m)-1)(1+\cos(l\pi/n))+1}^s=1,
\end{equation*}
where $s$ denotes $dim_HF_{\{n/m\}[L^0,0]}$ or $dim_HF_{\{n/m\}[L^1,\gamma(n,m,1)]}$.
The latter equation can be modified to 
\begin{equation*}
\displaystyle{P^s=\frac{1}{n}-\frac{\sqrt{2P(n,m)(P(n,m)-1)(1+\cos(l\pi/n))+1}^s}{n}},
\end{equation*}
from where:
\begin{align*}
s&=\lim_{n\rightarrow\infty}\frac{\ln\Big(\displaystyle\frac{1}{n}-\frac{\sqrt{2P(n,m)(P(n,m)-1)(1+\cos(l\pi/n))+1}^s}{n}\Big)}{\ln(P)}=\\
&=\lim_{n\rightarrow\infty} \frac{\ln\Big(\displaystyle\frac{1}{n}-\frac{\sqrt{4P(n,m)(P(n,m)-1)+1}^s}{n}\Big)}{\ln(P)}=\\
&=\lim_{n\rightarrow\infty} \frac{\ln\Big(\displaystyle\frac{1}{n}-\frac{(1-2P)^s}{n}\Big)}{\ln(P)}=\lim_{n\rightarrow\infty} \frac{\ln\Big(\displaystyle\frac{1}{n}-\frac{(1-2\pi/n)^s}{n}\Big)}{\ln(\pi/n)}
\end{align*}
Let us substitute $\nu=\pi/n$, hence
\begin{align*}
s&=\lim_{\nu\rightarrow 0} \frac{\ln\Big(\displaystyle\frac{\nu}{\pi}(1-(1-2\nu)^s)\Big)}{\ln(\nu)}= \lim_{\nu\rightarrow 0} \frac{\ln\Big(\displaystyle\frac{\nu}{\pi}\Big)}{\ln(\nu)}+\lim_{\nu\rightarrow 0} \frac{\ln(1-(1-2\nu)^s)}{\ln(\nu)}=\\
&=1+\frac{-\infty}{-\infty}=1+\lim_{\nu\rightarrow 0} \frac{2s\nu(1-2\nu)^{s-1}}{1-(1-2\nu)^s}=1+\lim_{\nu\rightarrow 0} \frac{2s\nu(1-(s-1)2\nu+O(2))}{1-(1-s2\nu+O(2))}=\\
&=1+\lim_{\nu\rightarrow 0} \frac{2s\nu+O(2)}{2s\nu+O(2)}=2
\end{align*}
\end{proof}
\newpage
\begin{figure}[hbp]
\begin{center}
\begin{picture}(140,160)(0,0)
\put(0,0){ 
\put(-10,0){\includegraphics{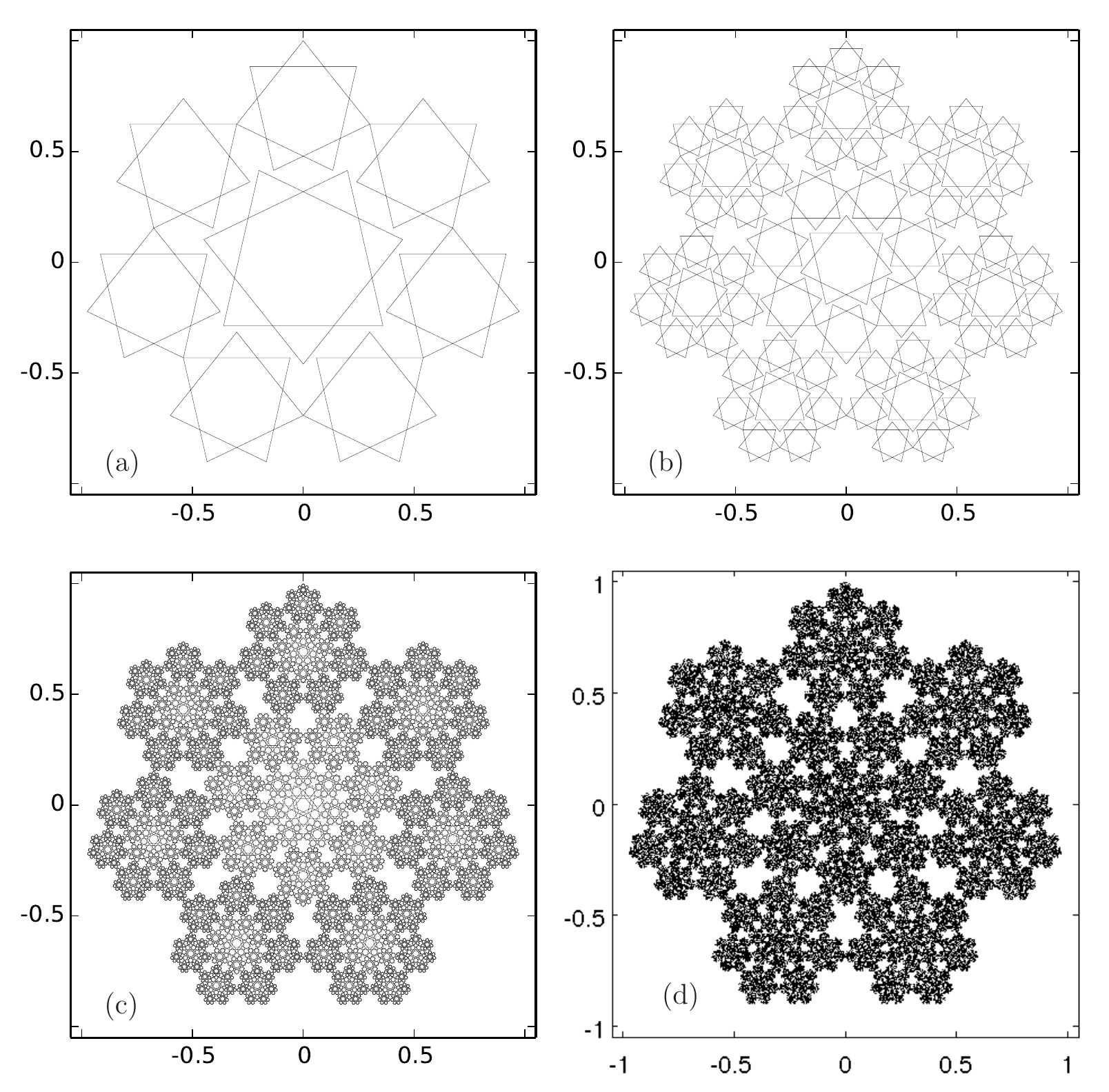}}
}
\end{picture}
\end{center}
\vspace{-0.5cm}
\caption{IFS with initial \{7/2\} star-polygon where $S_c$ has scaling ratio $OM^1/OA$ and rotation $\pi/7$. 
The first, the second and the fourth iterations are shown in panels (a), (b) and (c) respectively. 
100000 points that lie on the attractor of the IFS are shown in panel (d).}\label{fr19}
\end{figure}
\begin{Ex}\label{ex11}
 The Hausdorff dimensions of the attractor shown in figure \ref{fr19}(d) is\\ 
$dim_HF_{\{7/2\}}[M^1,\frac{\pi}{7}]\approx1.8773$ 
\end{Ex}
\newpage
\begin{figure}[hbp]
\begin{center}
\begin{picture}(140,160)(0,0)
\put(0,0){ 
\put(-10,0){\includegraphics{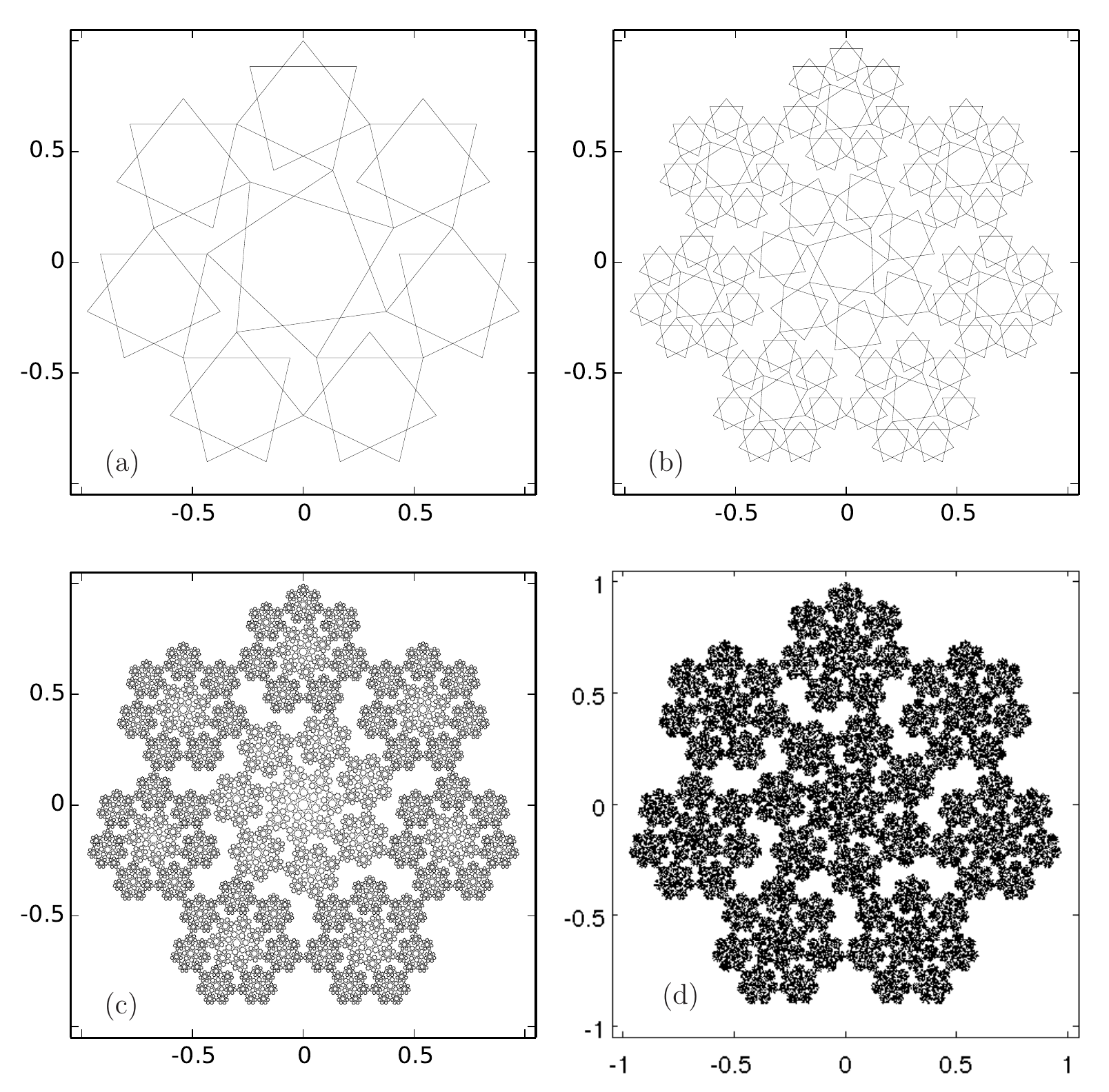}}
}
\end{picture}
\end{center}
\vspace{-0.5cm}
\caption{IFS with initial \{7/2\} star-polygon where $S_c$ has scaling ratio $OL^1/OA$ and rotation $\gamma(7,2,1)$ radians. 
The first, the second and the fourth iterations are shown in panels (a), (b) and (c) respectively. 
100000 points that lie on the attractor of the IFS are shown in panel (d).}\label{fr20}
\end{figure}
\begin{Ex}\label{ex12}
 The Hausdorff dimensions of the attractor shown in figure \ref{fr20}(d) is\\ 
$dim_HF_{\{7/2\}[L^1,\gamma(7,2,1)]}\approx1.8564$
\end{Ex}

\section{Special cases and equivalent IFS attractors}\label{sec5}
\begin{figure}[hbp]
\begin{center}
\begin{picture}(140,160)(0,0)
\put(0,0){
\put(-10,0){\includegraphics{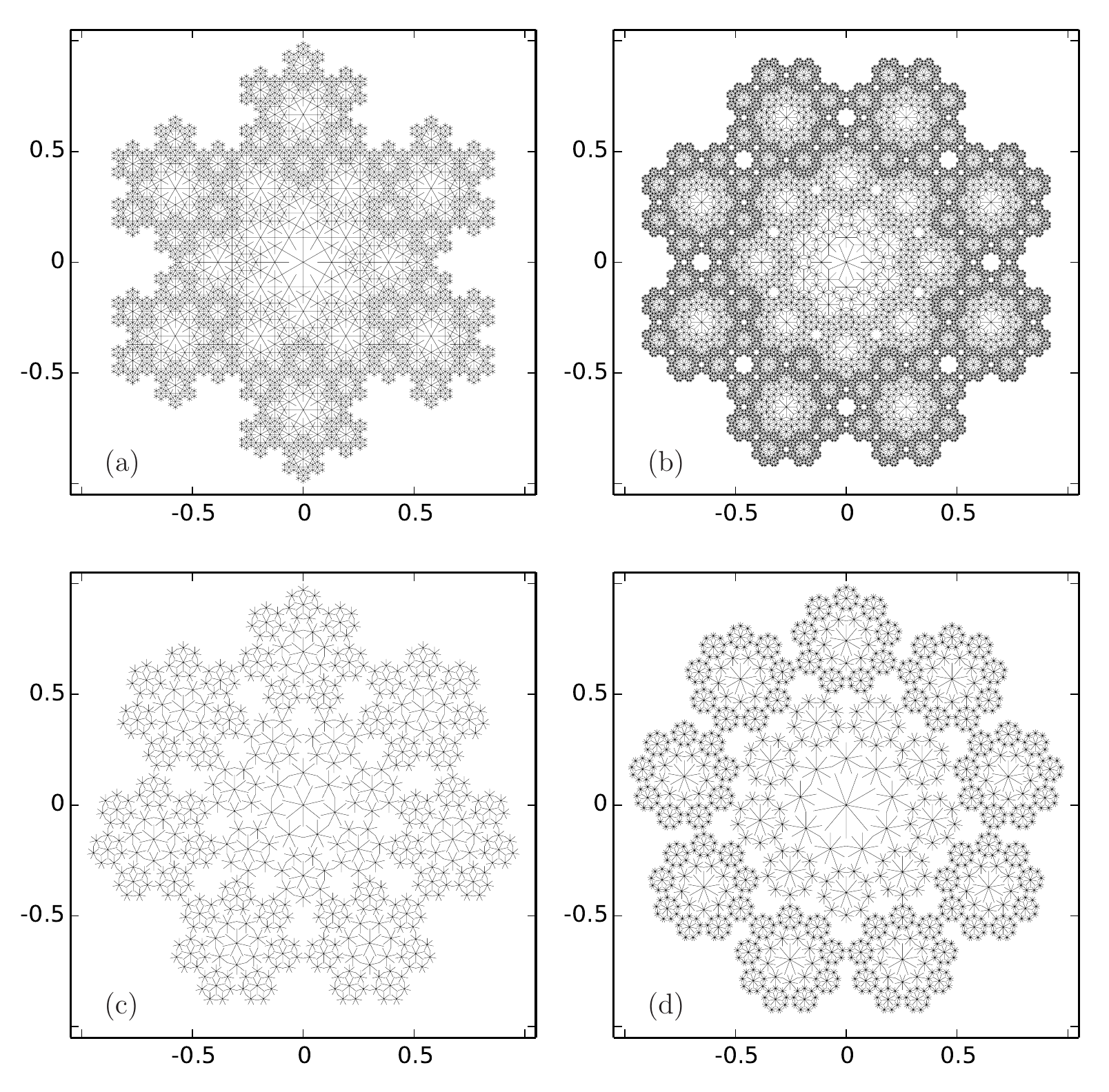}}
}
\end{picture}
\end{center}
\vspace{-0.5cm}
\caption{ In subpanels (a) and (b) we can see the third iteration of the IFS that originate from \{6/3\} and \{8/4\}, respectively. 
Both $S_c$ have scaling ratio $OL^2/OA$, where in panel (a) the rotation is $\pi/6$ and in (b) it is $\pi/8$.
In subpanels (c) and (d) we can see the third iteration of the IFS that originate from \{7/3.5\} and \{9/4.5\}, respectively. 
Both $S_c$ have scaling ratio $OM^1/OA$, where in panel (c) the rotation is $\pi/7$ and in (d) it is $\pi/9$.}\label{fr21}
\end{figure}
As in the previous sections, here we assume $n\geq2$, $n\in\mathbb{Z}$, $m\in\mathbb{Z}$, $0\leq m<n$, and $i\in\mathbb{Z}$, $i\geq 0$.
Let us now review the developed notation and how many of the well known fractals can be associated with it.
Firstly, we can say that the Cantor set results as an IFS attractor if $n=2$ and $P=1/3$.
As we saw in figure \ref{fr5}(a), the Sierpinski Triangle comes out when $n=3$ and $P=P(3,1)=1/2$ or $F_{\{3/1\}}$ and the Sierpinski Hexagon when $n=6$ and $P=P(6,2)=1/3$ or $F_{\{6/2\}}$.
The Greek Cross fractal appears when $n=4$ and $P=P(4,2)=1/2$ or $F_{\{4/2\}}$, while for $n=4$ and $P<1/2$ the invariant set for a Horseshoe map is produced.
The Sierpinski Pentagon appears for $n=5$ and $P=P(5,2)=1/(1+\text{golden ratio})$ or $F_{\{5/2\}}$.
When the centre map $S_c$ is taken into account, the Vicsec fractal can be produced when $n=4$ and $P=P_c=1/3$. 
Also, the Pentaflake and the Hexaflake are shown in figures \ref{fr11} and \ref{fr12} as $F_{\{5/2\}}[L^1,0]$ and $F_{\{6/2\}}[L^0,0]$, respectively.

However, the attractors of the special cases mentioned above do not originate from an unique star-polygons because 
if we generate random points on the attractor of the IFS, the image is defined by the maps $\{S_i\}$; see theorem \ref{th1}.
Therefore, if we do not alter $P$ and $n$, the IFS-attractors with any initial $\{n,m\}$-polygon will be the same.
Thus, we can assume that every attractor that originates from a \{2i,m\}-polygon is equivalent to the attractor of the IFS that originate from the \{2i,i\}-polygon, where the ratios and the rotations of the maps $\{S_1,...,S_{2i},S_c\}$ are kept the same as the ones used for the IFS of the \{2i,m\}-polygon.
However, when the exact polygons are plotted, as in section \ref{sec4}, due to the impossibility for infinite iterations to be realised, the integer $m$ also plays its role.
Two such examples are shown in figure \ref{fr21}, where in panels (a) and (b) the third iterations of the IFS of $F_{\{6/3\}}[L^2,\frac{\pi}{6}]$ and $F_{\{8/4\}}[L^2,\frac{\pi}{8}]$ are realised respectively.
One can see the difference with figures \ref{fr14}, \ref{fr15} and \ref{fr16} where $F_{\{6/2\}}[L^2,\frac{\pi}{6}]$, $F_{\{8/2\}}[L^2,\frac{\pi}{8}]$ and $F_{\{8/3\}}[L^2,\frac{\pi}{8}]$ are shown. 

In the case of odd $n=2i+1$ we do not have a star-polygon composed of $n$ line-segments that cross each other at the centre and are inscribed in all the \{2i+1,m\}-polygons in the same way as the \{2i,i\}-polygon is inscribed in any \{2i,m\}-polygon.
Therefore, we will construct such polygons as the star of $2i+1$ line segments that start from the centre of a $2i+1$ regular polygon and end up at its vertices. 
Let us denote this figure as $\{2i+1,i+1/2\}$-star polygon.
As the $\{2i+1,i+1/2\}$-polygon is inscribed in all the $\{2i+1,m\}$-polygons we can generalise that any attractor that originates from a \{n,m\}-polygon is equivalent to the attractor of the IFS that originates from the \{n,n/2\}-polygon where the ratios and the rotations of the maps $\{S_1,...,S_{n},S_c\}$ are kept the same as the ones used for the IFS of the \{n,m\}-polygon.
Two such examples are shown in figure \ref{fr21}, where in panels (c) and (d) the third iterations of the IFS of $F_{\{7/3.5\}}[M^1,\frac{\pi}{7}]$ and $F_{\{9/4.5\}}[M^1,\frac{\pi}{9}]$ are realised.
One can see the difference with figures \ref{fr19} where $F_{\{7/3\}}[M^1,\frac{\pi}{7}]$ was shown. 
\begin{Ex}\label{ex13}
 If the fractal figure \ref{fr21}(d) is infinitely iterated the attractor will have Hausdorff dimension:
$dim_HF_{\{9/4.5\}}[M^1,\frac{\pi}{9}]\approx1.8879$
\end{Ex}
Unlike the cases of $n=3,5,7$ and $9$, where the $F_{\{n/\frac{n}{2}\}}[M^1,\frac{\pi}{n}]$ is a non-self intersecting fractal with the copies of the initial $\{n/\frac{n}{2}\}$-polygon osculating with each other, for $n=2i+1$ when $n\geq11$, the scaled copies stop osculating and with the increase of the iterations they do not fill up the space in the most effective way.
The same effect appears when we take $F_{\{n/\frac{n}{2}\}}[L^2,\frac{\pi}{n}]$ for $n=2i$ when $n\geq10$.
Thus, the problem of finding scaling ratio for the $S_c$ for every $n\geq10$ where the rotation of $S_c$ is equal to $\pi/n$ stays as an open problem.
This is an important question due to the fact that the fractals that originate from an $n$-gon with rotation of $S_c$ equal to $\pi/n$ could have dimensions very close or equal to 2; see examples \ref{ex8} and \ref{ex9}.

\section*{Conclusion}

The present paper develops a universal technique that allows any star-polygon to be used for the construction of non-self intersecting fractal (Sierpinski $n$-gon or $n$-flake) by using IFS through random walk or through an exact scaling. 
Along the proposed scaling ratios, the Matlab code for IFS random walk fractal generation is provided so that someone interested in studying the geometry of this class of fractals could use it.
Important dimensions are computed, namely the dimension of the Sierpinki $\infty$-gon is proved to be 1, the dimension of the $\infty$-flake to be 2, and the dimension of $F\{6,2\}[L^2,\pi/6]$ to be 2 as well. 
It is also shown, that by using random walk IFS generator, identical attractors may result from different initial star-polygons. 
The proposed study can be extended if rotations are applied not only to the $S_c$ map, but to the $S_i$ maps as well.
However, this is still an ongoing research in development.

The techniques for construction and the provided ratios needed for the dimensions of the presented class of fractals is important not only for mathematicians, but also for engineers and other scientists who may be interested in fractal-shaped devices or who study the fractal shapes of nature.
With the advanced precision of the fabrication technology polyflakes may become an important design for devices such as antennas or chemical mixers for fuel cells, batteries, etc.
Also, fractal shapes are applicable in any kind of wave absorbers where the wave could be a sound wave, electromagnetic signal, light, caused by turbulent flow, etc.    
In other words, fractal designs are going to be part of the future physical devices at all scales and, hence, the research focused on fractal-shaped figures is important for many innovation processes.
  
\section*{Acknowledgement}
First of all I would like to thank my Geometry teacher Tanya Stoeva for the dedicated classes during the years at National Gymnasium of Natural Sciences and Mathematics "Lyubomir Chakalov" which still help me to overcome problems in the most intuitive way. Also, I would like to thank my family and friends for the support.  

\section*{Appendix (Matlab random generator)}\label{app}

\% name: Vassil Tzanov \\
\% function 'frac' that takes the matrix $C$, \\
\%$C=
\begin{bmatrix}
A1 & b1 &\\
A2 & b2 &\\
\vdots \\
An & bn &
\end{bmatrix}
$\\
\% Ai are matrices 2x2 ,bi are vectors 2x1, which are the i-th linear function in the "IFS"\\
\% $k$ defines the amount of points that we want to be plotted \\
\% on the attractor defined by $C$; the function 'frac' can plot fractals \\
\% with central polygon rotated at angle $rot$; the central map must be defined by\\
\% the last two rows of $C$; if there is no central map, $rot$ has to be defined as $0$\\
\texttt{function Fractal = frac(C,k,rot)}\\\\
\phantom{ss}\% verification of $C$\\
\texttt{\phantom{ss}D=size(C);\\
\phantom{ss}n=D(1)/2;\\
\phantom{ss}if D(2)$\sim$=3 \big| floor(n)$\sim$=n\\
\phantom{ssss}Fractal = 'Bad input size';\\
\phantom{ss}else}

\phantom{ssss}\% computation of the determinants of the matrices A\\
\texttt{\phantom{ssss}for i=1:n\\
\phantom{ssssss}dets(i)=abs(det(C((2*i-1):(2*i),1:2)));\\
\phantom{ssss}end\\
\phantom{ssss}dets=dets';}
  
\phantom{ssss}\% if some determinant is zero we have to add a small value, because\\
\phantom{ssss}\% we do not like this function to be executed with zero probability\\
\texttt{\phantom{ssss}dets=max(dets, max(dets)/(25*n));}

\phantom{ssss}\% the determinant are divided on their sum so we derive a probability vector\\
\texttt{\phantom{ssss}dets = dets/sum(dets);}
  
\phantom{ssss}\% the vector prob is defined as\\
\phantom{ssss}\% prob = [0, dets(1), dets(1)+dets(2), . . ., dets(1)+...+dets(n-1)] \\
\phantom{ssss}\% then "sum(prob$<$rand)" gives a random integer between $1$ and $n$ \\
\phantom{ssss}\% that we use in order to randomly choose one of the $n$-th functions\\
\texttt{\phantom{ssss}de=dets(1);\\
\phantom{ssss}prob(1)=0;\\
\phantom{ssss}for i=2:n\\
\phantom{ssssss}prob(i)=de;\\
\phantom{ssssss}de=de+dets(i);\\
\phantom{ssss}end}

\phantom{ssss}\% computations of the points' coordinates and filling the matrix $Fractal$\\
\texttt{\phantom{ssss}x=[0;0];\\
\phantom{ssss}Fractal=zeros(2,k+20);\\
\phantom{ssss}for j=1:(k+20)\\
\phantom{ssssss}int=sum(prob<rand);}\\
\phantom{ssssss}\% one of the functions is randomly chosen\\
\phantom{ssssss}i=int;\\
\phantom{ssssss}\% matrix computation of $t*x+(1-t)*y$, where $x$ is\\
\phantom{ssssss}\% the current point and $y$ is the point towards which $x$ is attracted\\
\phantom{ssssss}\% if $i$ corresponds to the last row, \\
\phantom{ssssss}\% a rotation at $rot$ radians around [0,0] must be executed\\
\texttt{\phantom{ssssss}if i==n\\
\phantom{ssssssss}x=[cos(rot),-sin(rot);sin(rot),cos(rot)]*x;\\
\phantom{ssssssss}x=C((2*i-1):(2*i),1:2)*x+(diag([1,1],0)-\\
-C((2*i-1):(2*i),1:2))*C((2*i-1):(2*i),3);\\
\phantom{ssssssss}Fractal(:,j)=x;\\
\phantom{ssssss}else\\
\phantom{ssssssss}x=C((2*i-1):(2*i),1:2)*x+(diag([1,1],0)-\\
-C((2*i-1):(2*i),1:2))*C((2*i-1):(2*i),3);\\ 
\phantom{ssssssss}Fractal(:,j)=x;\\
\phantom{ssssss}end\\
\phantom{ssss}end}
    
\phantom{ssss}\% the points of the matrix $Fractal$ are plotted\\ 
\texttt{\phantom{ssss}Fractal=Fractal(:,21:(k+20));\\
\phantom{ssss}plot(Fractal(1,:),Fractal(2,:),'k.','MarkerSize',1)\\
\phantom{ssss}axis('equal')\\
\phantom{ss}end\\
end }

\end{document}